\documentclass[a4paper,11pt]{article}
\usepackage{graphicx} 
\usepackage{amsthm, amsmath}
\usepackage{tikz,ifthen}
\usepackage{a4wide}
\usepackage{amsfonts}
\usepackage{url}
\usepackage{enumerate}
\usepackage{orcidlink}
\usepackage{hyperref}
\usepackage{authblk} 
\usepackage{xcolor}

\usetikzlibrary{shapes.geometric, patterns, calc, decorations.pathreplacing}

\newtheorem{theorem}{Theorem}[section]

\newtheorem{conjecture}[theorem]{Conjecture}
\newtheorem{lemma}[theorem]{Lemma}

\newcommand{\range}[1]{\tilde{\rho}(#1)}

\DeclareMathOperator{\rc}{rc}

\newcommand{\COMMENT}[1]{}
\renewcommand{\COMMENT}[1]{\textcolor{red}{\textbf{/*Comment: } #1\textbf{*/}}} 

\title{Patrolling cop vs omniscient robber}

\author[1]{Nina Chiarelli\,\orcidlink{0000-0002-8169-0925}\footnote{{\tt nina.chiarelli@famnit.upr.si}}}
\author[2]{Paul Dorbec\,\orcidlink{0009-0007-1179-6082}\footnote{{\tt paul.dorbec@unicaen.fr}}}
\author[3]{Miloš Stojaković\,\orcidlink{0000-0002-2545-8849}\footnote{{\tt milos.stojakovic@dmi.uns.ac.rs}}}
\author[4,5]{Andrej Taranenko\,\orcidlink{0000-0002-2438-0496}\footnote{{\tt andrej.taranenko@um.si}}}

\affil[1]{\,FAMNIT and IAM, University of Primorska, Slovenia}
\affil[2]{\,Université Caen Normandie, ENSICAEN, CNRS, Normandie Univ, GREYC UMR 6072, F-14000 Caen, France}
\affil[3]{\,Department of Mathematics and Informatics, Faculty of Sciences,\newline
University of Novi Sad, Serbia}
\affil[4]{\,Faculty of Natural Sciences and Mathematics, University of Maribor, Slovenia}
\affil[5]{\,Institute of Mathematics, Physics and Mechanics, Ljubljana, Slovenia}

\date{}

\begin{document}

\maketitle

\begin{abstract}
We study a variant of the classical Cops and Robbers game with one cop and one robber. The cop follows a fixed walk on the graph, called a patrol, that is chosen before the game begins. The robber is omniscient and knows the entire patrol in advance. A capture occurs when the robber comes within a given distance of the cop, and this distance is referred to as the capture distance. 

The patrol capture radius, $\range{G}$, is the minimum radius of capture required for the cop to always be able to capture the robber on a connected graph $G$, under optimal play. We initiate a systematic study of this parameter for several graph classes. We determine the exact value of $\range{G}$ for trees, establish upper and lower bounds for grids, and analyze the parameter for various families of chordal graphs, including interval graphs and caterpillars. Along the way, we develop general tools and structural results that may be of independent interest for the study of pursuit-evasion games with predetermined patrols and limited information.
\end{abstract}

\noindent {\bf Key words:} Cops and Robber game, pursuit-evasion game, radius of capture

\medskip\noindent
{\bf AMS Subj.\ Class:} 
05C57, 
91A24, 
05C05 

\section{Introduction}

\medskip

Introduced independently by Nowakowski and Winkler~\cite{NW-1983} and Quilliot~\cite{Qu-1983}, the now classical cop and robber game has gained massive popularity and has been intensively studied by numerous authors, see, e.g.,~\cite{BKP-2012, G-2010}. Also see the book by Bonato and Nowakowski~\cite{BoNoBook} for a comprehensive review of the topic. The cop and robber game is a pursuit-evasion game played on the vertices of finite undirected graphs. First, the cop chooses his starting position, then the robber occupies another vertex. They alternately either move to an adjacent vertex or stay in the same vertex. Both players are aware of the position of the opposing player throughout the game. The aim of the cop is to capture the robber, i.e., to occupy the vertex that is occupied by the robber, while the robber wants to evade the cop. A graph $G$ is a {\em cop-win} graph if the cop has a strategy to capture the robber after a finite number of moves regardless of the robber's playing strategy.  

Since the introduction of this game, many variants appeared in the literature. The first and most natural variant replaces a single cop with a set of cops---the minimum number of cops needed to capture the robber in a graph $G$, called the {\em cop number} of $G$ and denoted $c(G)$, was introduced by Aigner and Fromme~\cite{AF-1984}. 
A variant in which a cop can catch the robber from a distance, known as the distance-$k$ cop and robber game, was introduced by Bonato and Chiniforooshan~\cite{BC-2009}. This variant changes the condition for the cop's win; in this case he wins if he is at distance at most $k$ from the robber. The \emph{distance $k$ cop number} of a graph $G$, $c_k(G)$, is the minimum number of cops necessary to win for a given distance $k$. This game was further studied by Chalopin et al.~\cite{CCNV-2011} under the name: {\em the cop and robber game with radius of capture $k$}. Denote by ${\cal{CWRC}}(k)$ the set of graphs in which the cop has a winning strategy in the cop and robber game with radius of capture $k$ regardless of how the robber plays. Chalopin et al.~\cite{CCNV-2011} considered the cop and robber game with radius of capture $1$ on bipartite graphs, and characterized the bipartite graphs in ${\cal{CWRC}}(1)$. Graphs in ${\cal{CWRC}}(1)$ were also studied by Dang~\cite{Dang-2011}, where the cop and robber game with radius of capture 1 was considered on square-grids, $k$-chordal graphs, and outerplanar graphs. In~\cite{DrIrTa-2025} the \emph{radius of capture number} of a graph $G$, $\rc(G)$, was defined as the smallest $k$ such that $G \in {\cal{CWRC}}(k)$, i.e., $\rc(G)=\min{\{k \mid G \in {\cal{CWRC}}(k)\}}$ and some results on this invariant were presented.

Clarke et al.~\cite{Cl-et-al-2020} introduce and explore another variant of the cops and robber game, fundamentally departing from the classical model primarily through restrictions on the cops' information regarding the robber's location. Namely, in the classical game, both cops and the robber possess perfect information, always knowing each other's precise positions throughout the game. In contrast, Clarke et al.~\cite{Cl-et-al-2020} delves into \emph{zero-visibility} (introduced and studied earlier under the name vertex-to-vertex search by Tošić~\cite{Tosic-1985}) and \emph{$\ell$-visibility} variants. In the zero-visibility game, the robber is completely invisible to the cops unless a cop lands on the same vertex. Then in the \emph{$\ell$-visibility game}, studied also by Ba\v{s}i\'c et al.~\cite{bddgm} and Jones and Kinnersley~\cite{jones-kinn}, cops can only see the robber if the distance between them is at most a fixed parameter $\ell$, while the robber has perfect information about the cops' positions. This key modification fundamentally fractures the mechanics of the traditional game, inherently splitting the pursuers' objective into two conceptually distinct, resource-competing phases: an initial search phase dedicated to locating the target, followed by a tactical pursuit phase aimed at final interception. This new layer requires cops to maneuver to first \emph{see} the robber (i.e., move within distance $\ell$), and only then can they progress to capture him, with $c_\ell(G)$ denoting the smallest number of cops to catch the robber in this setting. If the goal of the cops is just to see the robber, then $c'_\ell(G)$ denotes the smallest number of cops to accomplish that. 

A crucial and at first sight somewhat counter-intuitive result in this setting is that seeing the robber does not automatically imply his capture, which is a significant departure from the classical game's dynamics. For instance, on a $C_4$ cycle in the $1$-visibility game, one cop suffices to see the robber,  $c'_1(C_4)=1$, but two are required for capture, and $c_1(C_4)=2$. Clarke et al.~proved that this gap persists even on cop-win graphs---where a single cop dominates under perfect information---showing that the information deficit alone can enforce a resource penalty of $c_\ell(G) = c'_\ell(G) + 1$. Furthermore, the constraints of $\ell$-visibility fundamentally alter the role of monotonicity in search strategies. Because pursuers in limited-visibility settings must frequently employ ``vibrating'' tactics to secure structural bottlenecks, previously cleared vertices are inherently subjected to temporary recontamination. To formally encapsulate this dynamic, Clarke et al.~introduced the concept of \emph{weak monotonicity}, denoted by the parameter $mc_\ell(G)$.

Finally, a particularly interesting finding established in~\cite{Cl-et-al-2020} is that the difference between the number of cops required in the $\ell$-visibility game and the number needed in the classical game ($c_\ell(G) - c(G)$) can be arbitrarily large, demonstrating how profoundly the visibility constraint can increase the resources necessary for success. Building upon this structural paradigm, Ba\v{s}i\'c et al.~\cite{bddgm} and Jones and Kinnersley~\cite{jones-kinn} independently answered a core open question posed by Clarke et al., establishing that the penalty between capturing and merely seeing the robber, $c_\ell(G) - c'_\ell(G)$, can also grow arbitrarily large.

Dumitrescu et al.~\cite{dumitrescu-et-al} study a variant of the cops and robber game in which the cops choose their walks in advance and publicly announce them before the game starts. Their work arises from the study of a discretized, offline version of the Lion and Man game (see~\cite{croft}) on graphs. These results were later extended by Brass et al.~\cite{brass-et-al}.

Here, we consider a variant of the game blending limited visibility, distance-based capture, and a predefined, publicly announced walk for the cop. 
Conceptually, a single cop follows a fixed, deterministic walk (a \emph{patrol}), while an \emph{omniscient} robber tries to evade capture forever using his complete knowledge of the cop's patrol sequence. To compensate for his predictability, the cop possesses a radius of capture $\rho \ge 0$. The central question of this paper is determining the minimum radius of capture required for the cop to successfully catch the robber. We refer to this parameter as the \emph{patrol capture radius} of $G$, denoted by $\range{G}$. The formal turn-by-turn rules of the game and the precise definition of this invariant are deferred to Section \ref{sec:preliminaries}.

We note that another way to conceptualize $\range{G}$ is through the parameter $c'_\ell(G)$. Namely, we have $\range{G} = \min\{\ell:\, c'_\ell(G) = 1\}$, which, consequently, formally classifies our problem under the visibility cops and robbers umbrella.

Yet another way of defining the same problem is through a \emph{graph cleaning process}, as noted in~\cite{Cl-et-al-2020}. One can forget the robber and instead declare every vertex initially dirty; at each round the cop cleans the closed $\rho$-neighborhood of the vertex he is at, and then remaining dirty vertices spread to adjacent vertices. This cleaning process eventually cleans the graph if and only if the cop on patrol with a patrol capture radius of $\rho$ wins against the robber.

Observe that in our setting, the cop cannot adapt his strategy to the moves of the robber. Some results with zero-visibility use probabilistic strategies, so that there is a non-zero probability that the robber gets caught. In our setting, we force the strategy to be deterministic. Clearly, for the cop to have any possibility of winning, he must visit or at least see every vertex of the graph; otherwise, the robber wins.

Another concept that shares similarities with our cop's patrol is the \emph{watchman's walk} of a graph $G$, which is a minimum closed dominating walk of a single watchman traversing $G$---in other words, any minimum closed walk such that every vertex is either visited or seen from a visited neighbor. Hence, in our terminology, it is the minimum closed cop's patrol that catches  a stationary robber with a radius of capture $1$. The watchman’s walk was introduced by Hartnell et al.~\cite{HaRa-98} in 1998, and~\cite{Dyer2022} studies it on graph products, including grid graphs (Cartesian products of paths).

We proceed as follows. In Section~\ref{sec:trees}, we determine the exact value of $\range{G}$ for trees, as well as for some unicyclic graphs. In Section~\ref{sec:grids}, we establish upper and lower bounds for $\range{G}$ on grids. Finally, in Section~\ref{sec:chordal}, we analyze this parameter for chordal graphs, covering in particular the caterpillars and interval graphs. Along the way, we develop general tools and structural results that may be of independent interest in the study of pursuit-evasion games with predetermined patrols and limited information.

\section{Preliminaries}\label{sec:preliminaries}

We formally define the pursuit-evasion game played on a connected graph $G$ with exactly one cop and one robber. Let $\rho\geq 0$ be an integer that we refer to as radius of capture. The cop has a fixed walk on $G$, his \emph{patrol}, and no information on the robber's position at any time. On the other hand, the robber is \emph{omniscient}, meaning he knows all of the cop's movements in advance (the cop's predetermined patrol plan was handed to the robber), and he tries not to be caught by the cop on the graph using this knowledge.

Initially, the cop starts at the first vertex of the patrol; after that, the robber chooses his starting position. Then, they alternate in moving to a neighboring vertex or staying put, the cop goes first, and he follows his patrol at all times. The cop wins if he can capture the robber, i.e., if he is at distance at most $\rho$ from the robber at any point of the game. The \emph{patrol capture radius} of a graph $G$, denoted by $\range{G}$, is the minimum radius of capture that the cop must possess to ensure that he can capture the robber on $G$ in this setting.

A \emph{weak homomorphism} from a graph $G$ to a graph $H$ is a mapping $\varphi\colon V(G) \rightarrow V(H)$ such that for any two adjacent vertices $u, v\in V(G)$, it holds that $d_H(\varphi(u),\varphi(v)) \leq 1$; i.e., they are mapped either to the same vertex or to adjacent vertices. A subgraph $H$ of a graph $G$ is a \emph{weak retract} of $G$ if there exists a weak homomorphism $\varphi\colon V(G) \rightarrow V(H)$ with $\varphi(u)=u$ for all $u\in V(H)$. The map $\varphi$ is called a \emph{weak retraction}.

We prove the following property that we use extensively in the rest of the paper.

\begin{theorem}\label{th:retract}
    If $H$ is a weak retract of a connected graph $G$ and $\range{H} > \rho$, 
    then $\range{G} > \rho$.
\end{theorem}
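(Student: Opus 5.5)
We argue by contraposition on strategies: given any patrol $W=(w_0,w_1,\dots)$ of the cop in $G$, we exhibit an escaping robber strategy in $G$ with radius of capture $\rho$. Let $\varphi\colon V(G)\to V(H)$ be the weak retraction. The plan is to project the cop's patrol onto $H$, let the robber play against the projected patrol inside $H$, and use the fact that the robber's position is fixed by $\varphi$ to transfer distances back to $G$.

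\textbf{Step 1: the projected patrol is a legal patrol in $H$.} Set $W'=(\varphi(w_0),\varphi(w_1),\dots)$. Consecutive vertices $w_i,w_{i+1}$ of $W$ are equal or adjacent in $G$. Since $\varphi$ is a weak homomorphism, $\varphi(w_i)$ and $\varphi(w_{i+1})$ are then equal or adjacent in $H$. Hence $W'$ is a valid walk (with stays allowed), i.e.\ a patrol in $H$.

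\textbf{Step 2: the robber plays in $H$.} Since $\range{H}>\rho$, the cop following $W'$ in $H$ with radius $\rho$ does not capture an omniscient robber. So there is a robber walk $r_0,r_1,\dots$ in $H$ that respects the move order and satisfies $d_H(\varphi(w_i),r_j)>\rho$ at every moment the cop is at $\varphi(w_i)$ and the robber is at $r_j$. The robber in $G$ simply follows this walk, which is legal in $G$ because $H$ is a subgraph of $G$. He knows $W$, hence knows $W'$.

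\textbf{Step 3: distances do not shrink when passing to $G$.} This is the key step. We claim that for every $u\in V(G)$ and $v\in V(H)$ we have $d_H(\varphi(u),v)\le d_G(u,v)$. To see this, take a shortest $u$–$v$ path in $G$ and apply $\varphi$ to its vertices. This yields a walk in $H$ with repetitions allowed, of length at most $d_G(u,v)$, from $\varphi(u)$ to $\varphi(v)=v$. Therefore, at every moment,
\[ d_G(w_i,r_j)\ \ge\ d_H(\varphi(w_i),r_j)\ >\ \rho, \]
so the cop never captures the robber in $G$. As $W$ was arbitrary, $\range{G}>\rho$.

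The only points needing care are that the capture condition is checked at every half-round, which is handled because the inequality holds pointwise in time, and that $\varphi$ fixes $H$, which is exactly what lets the robber's position appear unchanged on both sides. I do not expect a real obstacle. The crux is the distance inequality in Step 3, and it follows directly from the definition of a weak homomorphism.
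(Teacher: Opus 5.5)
Your proposal is correct and follows essentially the same shadowing argument as the paper. You project the patrol via $\varphi$, let the robber play his $H$-strategy against the projected cop, and conclude $d_G(w_i,r_j)\ge d_H(\varphi(w_i),r_j)>\rho$, using that $\varphi$ maps walks to walks of no greater length and fixes the robber's position. Your Step 3 just spells out the distance-nonincreasing property that the paper states in one line.
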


\begin{proof}
    Let $H$ be a weak retract of a connected graph $G$, and $\varphi\colon V(G) \rightarrow V(H)$ a weak retraction. We employ a standard shadowing argument.    The robber utilizes his winning strategy on $H$, restricting his movements entirely to the vertices of $H$, and imagining for every position $v_c \in V(G)$ that the cop occupies that the cop is actually on $\varphi(v_c) \in V(H)$.

    Since $H$ is a subgraph of $G$, any valid move for the robber in $H$ is also a valid move in $G$. The cop's walk projected onto $H$ remains valid since, for every edge the cop traverses in $G$, the weak homomorphism dictates a corresponding valid move in $H$ (either remaining stationary or traversing an edge).

    By following this strategy, the robber ensures that the distance between himself and the simulated cop in $H$ is always strictly greater than $\rho$. Crucially, because a weak homomorphism maps every walk in $G$ to a walk in $H$ of no greater length, it never increases shortest-path distances. Since $\varphi(v_r) = v_r$ for any position $v_r$ of the robber, it follows that
    \[d_G(v_r,v_c) \ge d_H(\varphi(v_r),\varphi(v_c)) = d_H(v_r,\varphi(v_c)) > \rho.\]
    Thus, the actual cop never comes within distance $\rho$ of the robber in $G$, preventing capture and proving that $\range{G} > \rho$.
\end{proof}

\section{Trees and unicyclic graphs} \label{sec:trees}

In this section, we investigate the game on trees and related graph families, providing exact characterizations of the patrol capture radius. We begin by defining a local guarding maneuver that allows the cop to systematically clear small subtrees. We then introduce a specific branching structure called a \emph{triod}, which acts as the fundamental evasive subgraph for the robber. By determining the patrol capture radius necessary to defeat the robber on triods, we establish a complete formula for $\range{T}$ on any general tree $T$. Finally, we extend our analysis beyond trees to graphs featuring a central clique or cycle---specifically clique-triods and cycle-triods---demonstrating how these central structures alter the evasion dynamics and the required capture radius. 

We say that the cop \emph{checks} a vertex $v$ if he comes within distance at most $\rho$ from $v$.

\begin{lemma} \label{l:local-guarding}
    Let $G$ be a graph, and let $v \in V(G)$ be a vertex. If the cop has a radius of capture $\rho$, then any patrol that visits $v$ can be modified to check all vertices at distance at most $2\rho + 1$ from $v$ without permitting the robber to enter $v$ during this time.
\end{lemma}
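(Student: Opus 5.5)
The plan is to splice into the patrol, at a moment when the cop is at $v$, a sequence of out-and-back excursions from $v$. During each excursion the cop stays within distance $\rho$ of $v$, so $v$ is continuously checked. The vertices reached in these excursions will then check the whole ball of radius $2\rho+1$ around $v$.

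First observe that while the cop stands on a vertex $u$ with $d(u,v)\le\rho$, the robber cannot be at $v$: he would be within distance $\rho$ of the cop and hence captured. The robber also cannot step onto $v$ on his move, since capture is checked at every point of the game. So throughout any walk that never leaves the ball $B_\rho(v)$, the robber never occupies $v$. This is the only invariant we need, and it holds regardless of the robber's strategy.

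Now fix a target vertex $w$ with $d(v,w)\le 2\rho+1$. Choose a shortest $v$--$w$ path $v=x_0,x_1,\dots,x_k=w$ with $k\le 2\rho+1$. Let the cop walk from $v$ to $u=x_{\min(k,\rho)}$ and then back to $v$ along the same path. All vertices on this excursion lie in $B_\rho(v)$, so the invariant holds. At $u$ we have $d(u,w)=k-\min(k,\rho)\le \rho+1$.

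This leaves a gap of one: with $k=2\rho+1$ the cop checks only up to distance $2\rho$ along that path. The cop therefore checks every vertex within distance $2\rho$ of $v$ by performing the excursion to each vertex $u$ at distance exactly $\rho$ (or less, in a small component). The vertices at distance exactly $2\rho+1$ need a separate argument, and this is the main obstacle.

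There are two ways to close the gap. The first is to read the statement's "check" in the game sense: a robber at distance $2\rho+1$ is at distance $\rho+1$ from $u=x_\rho$, and he cannot be certified caught unless the timing of his moves is used. Here the key point is that the cop moves first. After the cop arrives at $x_\rho$, a robber at $w$ who wishes to remain uncaught must not move toward $v$. If the robber stays put, the cop may advance one more step to $x_{\rho+1}$, which is outside $B_\rho(v)$, and catch him. During that single step, however, the robber is at distance at least $2\rho+1-1$ from $v$ and so cannot reach $v$, which preserves the goal that the robber does not enter $v$.

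So the refined excursion goes out to $x_{\rho+1}$ and straight back. I would verify that the robber cannot reach $v$ during the two rounds in which the cop is at $x_{\rho+1}$ or just returning. This follows from a distance count: any robber within distance $\rho+1$ of $v$ at that time is within distance $2\rho+1$ of $x_{\rho+1}$ along the path, and a more careful bound using the triangle inequality $d(r,v)\ge d(x_{\rho+1},v)-d(r,x_{\rho+1})$ should show he would have been caught earlier. I expect this parity and timing bookkeeping to be the delicate part. If it fails, I fall back to proving the statement with $2\rho+1$ interpreted as the vertices checked when the cop oscillates between $x_\rho$ and $x_{\rho+1}$.

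The final step is to concatenate these excursions for all directions, which gives a finite closed walk based at $v$ that is inserted into the original patrol.
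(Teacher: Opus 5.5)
Your final construction is the paper's: going out along a shortest $v$--$w$ path to $x_{\rho+1}$ and straight back is exactly the walk $W_u$ the paper splices in. The gap is in the one step that matters, the round in which the cop stands at $x_{\rho+1}$. You propose to show that the robber cannot reach $v$ then, or that a robber close to $v$ ``would have been caught earlier''. Both claims are false.

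Since $d(x_{\rho+1},v)=\rho+1$, the vertex $v$ is unchecked in that round. This cannot be avoided: every vertex within distance $\rho$ of a vertex at distance $2\rho+1$ from $v$ is at distance at least $\rho+1$ from $v$. A robber waiting on a neighbor $a$ of $v$ on the far side from the path, so that $d(a,x_\rho)=\rho+1$ and $d(a,x_{\rho+1})=\rho+2$, can legally step onto $v$. For instance, take the path $a_2,a,v,x_1,x_2,x_3$ with $\rho=1$ and excursion $v,x_1,x_2,x_1,v$. A robber moving $a_2\to a\to v$ while the cop moves $v\to x_1\to x_2$ is at distance $2>\rho$ from the cop after each of his moves, so he is never in danger before reaching $v$.

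Your inequality $d(r,v)\ge d(x_{\rho+1},v)-d(r,x_{\rho+1})$ cannot help, because it gives a nonpositive lower bound once $d(r,x_{\rho+1})\ge\rho+1$. Your discussion of the robber at $w$ also addresses the wrong robber. The vertex $w$ is checked simply because $d(x_{\rho+1},w)=k-\rho-1\le\rho$, whatever the robber does. What has to be controlled is a robber elsewhere near $v$.

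The missing idea is that entering $v$ in that round is \emph{suicidal} rather than impossible. The vertex $v$ is unchecked for exactly the one round the cop spends at $x_{\rho+1}$. Since the cop moves first, his next move back to $x_\rho$ puts $v$ at distance $\rho$ and captures a robber on $v$ before he can move on; in the example, he is caught when the cop returns to $x_1$. This is how the paper argues. It is also the sense of ``not permitting the robber to enter $v$'' that is used later, namely that the robber cannot pass through $v$ in the proof of Theorem~\ref{th:trees}.

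With this observation, your excursions complete the proof: to $x_{\min(k,\rho)}$ when $k\le 2\rho$, and to $x_{\rho+1}$ and back when $k=2\rho+1$, all concatenated at $v$. Without it, the proposal does not prove the lemma. Your fallback of reinterpreting which vertices count as checked changes the statement rather than proving it.
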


\begin{proof}
    Let $O$ be an arbitrary fixed patrol that visits $v$. For every vertex $u$ at distance at most $2\rho +1$ from $v$ and every time $v$ appears in $O$ we replace the appearance of $v$ with a walk $W_u = (v, w_1, \ldots, w_{\rho}, w_{\rho+1}, w_{\rho}, \ldots, w_1, v)$, where $P=v w_1 \ldots w_{\rho}$ is the sub-path of a shortest $u-v$ path and $w_{\rho+1}$ is a neighbor of $w_{\rho}$ that is on a shortest $u-w_{\rho}$ path.
    
    Note that, during the movement along $P$, the robber clearly cannot enter $v$, as $v$ remains continuously checked by the cop. When the cop is at $w_{\rho}$, the vertex $u$ is at distance at most $\rho+1$ from $w_{\rho}$. The cop then moves one more step towards $u$ on a shortest $u-w_{\rho}$ path to $w_{\rho+1}$, and in his next turn, he moves back to $w_{\rho}$. By moving to $w_{\rho+1}$, the cop successfully checks $u$. 
       
    This maneuver means that $v$ is unguarded (at distance $\rho+1$ from the cop) for exactly one round. This single unguarded round allows the robber to enter $v$ on his turn, but does not give him enough time to leave it. Because the cop moves first in the subsequent round, his return to $w_{\rho}$ instantly places $v$ back within his capture radius $\rho$, catching the robber before he can make his next move. Knowing this, the robber is forced to avoid $v$, as required.
\end{proof}



    
    

A \emph{triod} is a tree $T$ with three leaves, and a unique vertex $v$ of degree three that we refer to as the \emph{origin}, see Figure~\ref{fig:triod-c-triod}~(i). We define $\ell_3(T)$ to be equal to the minimum distance from a leaf to the origin and call it \emph{the triod size}. 

A \emph{clique-triod} is a graph made from a clique on at least $3$ vertices, referred to as the \emph{clique-origin}, and three vertex-disjoint paths, each attached to a different vertex of the clique-origin, see Figure~\ref{fig:triod-c-triod}~(ii). Given a clique-triod $G$, we define $\ell_3(G)$ as the length of the shortest of the three paths.

\begin{lemma} \label{l:robber-special-graphs}
If the cop has a radius of capture $\rho$, then
\begin{enumerate}[(i)] 
    \item the robber can win on all triods $G$ satisfying $\ell_3(G)\ge 2\rho + 2$; and
    \item the robber can win on all clique-triods $G$ satisfying $\ell_3(G)\ge 2\rho+1$.
\end{enumerate}
\end{lemma}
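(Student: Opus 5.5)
We exhibit an explicit evading strategy for the omniscient robber. The argument is a direct strategy description; Theorem~\ref{th:retract} is not needed here, though the result will later be combined with it. Throughout we use the convention that capture is tested right after each cop move. So it suffices to keep the cop--robber distance at least $\rho+2$ after each robber move, except in configurations where we check directly that a single cop step cannot bring it to $\rho$ or below.

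\textbf{Set-up for (i).} Let $v$ be the origin and $L_1,L_2,L_3$ the three legs (paths from $v$ to the leaves). By shortening legs (the robber simply never uses the surplus) we may assume each leg has length exactly $2\rho+2$. The cop's position is described by a pair: a leg $L_j$ and a depth $d=d(v,\text{cop})$. The robber's position is described likewise by a leg $L_i$ and a depth $e$.

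\textbf{The invariant.} If the two players are in different legs, their distance is $d+e$. If they are in the same leg, it is $|d-e|$. Call a leg \emph{deeply visited} during an excursion of the cop if the cop reaches depth at least $\rho+1$ in it. The robber will maintain:
\begin{itemize}
\item[$(\ast)$] whenever the cop is not deeply inside some leg, the robber sits in a leg $L_i$ that the cop is not currently in, at depth $e\ge\rho+1$, with $e\le 2\rho+2$.
\end{itemize}

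\textbf{Shallow dips.} Suppose the cop makes a shallow dip into $L_i$, reaching depth $d\le\rho+1$. The robber retreats to depth $d+\rho+1\le 2\rho+2$, which keeps him at distance at least $\rho+1$ from the cop. This is exactly where the length $2\rho+2$ is used. When the cop leaves $L_i$, the robber comes back up to depth $\rho+1$.

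\textbf{Switching legs.} The delicate part is when the cop is about to deeply visit the robber's current leg $L_i$. Omniscience is used here. During the cop's \emph{previous} deep excursion, into some leg $L_j$, the origin is more than $\rho$ away from the cop, so it is free. At that time the robber chooses, among the two legs other than $L_j$, the one whose next deep visit comes later. He then relocates by passing through $v$, timing the move so that he never comes within distance $\rho$ of the cop.

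\textbf{Main obstacle.} The bottleneck of the whole proof is the timing in this relocation: passing from depth $\rho+1$ in one leg to depth $\rho+1$ in another takes $2\rho+2$ robber moves, while the origin may be safe for a shorter window. I expect to resolve this as follows.
\begin{itemize}
\item The robber does not need to reach depth $\rho+1$ immediately. He only needs to stay ahead of the cop's returning path, and the cop moves at speed one.
\item The cop spends at least $2(\rho+1)$ rounds at depth at least $1$ during a deep excursion (going in to depth $\rho+1$ and coming back).
\item Moreover, the robber may alternatively follow the cop into $L_j$ itself, trailing at distance $\rho+1$, and then exit $L_j$ at the moment the cop turns back.
\end{itemize}
I would do a careful case analysis on whether the cop's next deep excursion is into $L_j$ again or into another leg. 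In each case I would check that one of these two moves (relocating to the later-visited leg, or trailing the cop inside $L_j$) preserves $(\ast)$.

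\textbf{Part (ii).} For clique-triods the same scheme applies, with the clique-origin $K$ playing the role of $v$ and depth measured as distance to $K$. The difference is that a move between two attachment vertices of $K$ costs one step, but distances between legs are then $d+e+1$. This extra $+1$ in the inter-leg distance is exactly what allows the requirement on leg lengths to drop from $2\rho+2$ to $2\rho+1$ in the shallow-dip retreat computation. The switching argument is otherwise identical, with the timing inequalities shifted by one.
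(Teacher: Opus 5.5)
There is a genuine gap: the proposal stops exactly where the proof has to happen. You correctly identify the timing of a leg switch as the crux, but you then only list ingredients you ``expect'' to use and a case analysis you ``would do''. Nothing shows that the robber can leave a leg the cop is about to sweep, and that is the whole content of the lemma.

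Worse, the framework you set up makes this step fail as stated. Take ``deep'' to mean depth at least $\rho+1$, and consider an excursion into $L_j$ that just touches depth $\rho+1$ and turns back. It frees the origin for a single round, and a robber in another leg cannot use it. While the cop is at depth $\rho$, such a robber must be at depth $\ge 1$. He needs two moves to reach depth $1$ in a third leg. If he steps onto $v$ when the cop reaches depth $\rho+1$, he is caught as soon as the cop steps back. So ``relocate during the previous deep excursion'' is not always executable.

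The right threshold is the paper's \emph{house}: the $\rho+1$ vertices farthest from $v$ on a leg, all at distance at least $\rho+2$ from $v$. Only an entry into a house can trap the robber in a leg, and every return from a house offers a crossing to a robber who has stayed close to the cop. Separately, your invariant $(\ast)$ is contradicted by your own shallow-dip rule: during a dip into $L_i$ the cop is in the robber's leg without being deep.

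The missing idea is that the robber must not rest at depth $\ge\rho+1$ but must \emph{shadow} the cop at all times. After each of his moves, the distance is $\rho+2$ if the cop's next step approaches him, $\rho+1$ if it recedes, and unchanged otherwise. This can be maintained unless he is pinned at a leaf. It forces him to stand exactly on $v$ when the cop, returning from a house, is at depth $\rho+1$ and about to step toward $v$.

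That moment is the robber's only branching decision. He takes the branch that is neither the cop's current one nor the one containing the next house the cop will enter. Since every house is at distance at least $\rho+2$ from $v$, the cop cannot enter another house without the robber being drawn back to $v$ for a new decision. Being cornered at a leaf, which means the cop enters the house of the robber's leg, would therefore contradict his last choice. Your third bullet gestures at this, but it is the core mechanism, not a fallback.

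For (ii), your explanation of why $2\rho+1$ suffices is misplaced. The shallow-dip retreat happens inside one leg, where the inter-leg $+1$ plays no role; with legs of length $2\rho+1$, dips are only harmless up to depth $\rho$. The $+1$ enters in the switching: a house on $P_i$ is at distance at least $\rho+2$ from the other two attachment vertices, which are adjacent to each other. The paper also needs an extra decision case, with the robber at $o_i$ and the cop approaching along another branch at distance $\rho+2$, in which the robber may have to sidestep to a third attachment vertex. So the switching is not ``otherwise identical'' to~(i).
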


\begin{proof}
In both parts of this lemma, the robber utilizes his knowledge of the cop's predetermined patrol to consistently stay as close to the cop as possible without being caught. Because the cop moves first in every round, the robber must position himself at the end of his turn so that he remains strictly outside the cop's capture radius $\rho$ \emph{after} the cop's upcoming move. 

Hence, if the cop's next move will be towards the robber, the robber moves to a vertex at distance $\rho + 2$ from the cop; this ensures that after the cop advances, the distance drops to $\rho + 1$, keeping the robber safe. If the cop's next move will be away from the robber, the robber safely moves to a vertex at distance $\rho + 1$, which will naturally increase to $\rho + 2$ after the cop moves. Finally, if the cop's next move will not change the distance between them, the robber does not move, maintaining his current safe distance.

\begin{figure}[htb]
    \centering
\begin{tikzpicture}[>=stealth, thick]

    \tikzstyle{vertex}=[circle, fill=black, inner sep=1.5pt]
    \tikzstyle{house}=[line width=4pt, blue!40] 
    
    \begin{scope}[xshift=0cm]
        \node at (0, 4) {(i): Triod $G$};
        
        \node[vertex, label=below:{$v$}] (v) at (0,0) {};
        
        \draw (v) -- (90:3) node[midway, left] {$P_1$};
        \draw (v) -- (210:3) node[midway, above left] {$P_2$};
        \draw (v) -- (330:3) node[midway, above right] {$P_3$};
        
        \draw[xshift=3pt, decorate, decoration={brace, amplitude=5pt}] (90:3) -- (90:1.8) node[midway, right=6pt] {$\rho+1$};
        \draw[blue!40] (90:3) -- (90:1.8) node[midway, left=3pt] {$h_1$};

        \draw[yshift=-3pt, xshift=1pt, decorate, decoration={brace, amplitude=5pt}] (210:1.8) -- (210:3) node[midway, below right] {$\rho+1$};
        \draw[blue!40] (210:3) -- (210:1.8) node[midway, above left] {$h_2$};

        \draw[yshift=3pt, xshift=1pt, decorate, decoration={brace, amplitude=5pt}] (330:1.8) -- (330:3) node[midway, above right] {$\rho+1$};
        \draw[blue!40] (330:3) -- (330:1.8) node[midway, below left] {$h_3$};
        
        \draw[house] (90:1.8) -- (90:3);
        \draw[house] (210:1.8) -- (210:3);
        \draw[house] (330:1.8) -- (330:3);
    \end{scope}

        \begin{scope}[xshift=8cm]
        \node at (0, 4) {(ii): Clique-triod $G$};
        
        \tikzstyle{ocNode}=[circle, fill=black, inner sep=1.3pt]
        
        \node[ocNode, label=above left:{$o_1$}] (o1) at (90:0.4) {};
        \node[ocNode, label=above right:{}] (o2) at (18:0.4) {};
        \node[ocNode, label=right:{$o_2$}] (o3) at (306:0.4) {};
        \node[ocNode, label=left:{$o_3$}] (o4) at (234:0.4) {};
        \node[ocNode, label=above left:{}] (o5) at (162:0.4) {};
        
        \draw (o1) -- (o2) -- (o3) -- (o4) -- (o5) -- (o1);
        \draw (o1) -- (o3) -- (o5) -- (o2) -- (o4) -- (o1);

        
        \node[label=90:{}] (L1) at (90:3) {};
        \draw (o1) -- (L1) node[midway, left] {$P_1$};
        
        \node[label=234:{}] (L2) at (234:3) {};
        \draw (o4) -- (L2) node[midway, above left] {$P_2$};
        
        \node[label=306:{}] (L3) at (306:3) {};
        \draw (o3) -- (L3) node[midway, above right] {$P_3$};

        \draw[xshift=3pt, decorate, decoration={brace, amplitude=5pt}] (90:3) -- (90:1.8) node[midway, right=6pt] {$\rho+1$};
        \draw[blue!40] (90:3) -- (90:1.8) node[midway, left=3pt] {$h_1$};
        \draw[house] (90:1.8) -- (90:3); 

        \draw[yshift=-1pt, xshift=3pt, decorate, decoration={brace, amplitude=5pt}] (234:1.8) -- (234:3) node[midway, below right] {$\rho+1$};
        \draw[blue!40] (234:3) -- (234:1.8) node[midway, above left] {$h_2$};
        \draw[house] (234:1.8) -- (234:3); 

        \draw[yshift=1pt, xshift=3pt, decorate, decoration={brace, amplitude=5pt}] (306:1.8) -- (306:3) node[midway, above right] {$\rho+1$};
        \draw[blue!40] (306:3) -- (306:1.8) node[midway, below left] {$h_3$};
        \draw[house] (306:1.8) -- (306:3); 
    \end{scope}

\end{tikzpicture}
    \caption{An example of a triod and a clique triod, with houses denoted in purple.}
    \label{fig:triod-c-triod}
\end{figure}

\paragraph{(i)}
    Let the origin of the triod be $v$, and let $P_1, P_2, P_3$ denote the three paths joining $v$ to the leaves; we call them \emph{branches}. The $\rho+1$ vertices furthest from $v$ on each path will be called \emph{houses}~$h_1, h_2, h_3$, respectively. See Figure \ref{fig:triod-c-triod}~(i) for reference.
    
    When choosing his initial position, the robber follows the same principle as while moving: he places himself on a vertex at distance either $\rho + 2$ or $\rho + 1$ from the cop, depending on whether the cop's first move will be towards him or not.
    If there are multiple such vertices, the robber chooses a branch that is neither the cop's current branch nor the branch containing the next house that the cop will visit. If the cop is at the origin, we arbitrarily choose a branch that does not contain the next house the cop visits to act as the cop's current branch. 

    Following this strategy, if the robber must choose between two possible moves, he has to be at the origin, and furthermore, the cop must have moved to a vertex at distance $\rho+1$ from the origin with the intention to continue moving towards the origin. In such a case, if the cop is on the branch $P_i$, for some $i\in\{1,2,3\}$, and the next house the cop is going to visit is on the branch $P_j$, for some $j\in\{1,2,3\}$, then the robber moves to a branch $P_k$, for $k\not \in \{i,j\}$. Note that if $i=j$, then the robber can move to either of the two remaining branches; otherwise, there is only one valid branch left for him to choose. For example, if the cop approaches from branch $P_1$ and the next house he is going to visit is $h_2$, the robber moves toward $h_3$.
    
    Suppose, for a contradiction, that the cop catches the robber with this strategy. This means that prior to the final turn, the cop moves to a vertex at distance $\rho+1$ from the robber, the robber cannot shadow him at distance $\rho+2$, and the cop makes one more move towards the robber, thus decreasing the distance to $\rho$. But this implies that the robber has reached a leaf, and the cop has entered the house in which the robber is hiding. This further implies that the last time the robber had to leave the origin, he chose the wrong direction. Indeed, the cop could not have visited any other house in the meantime, because the origin is at distance at least $\rho+2$ from any other house, yielding a contradiction.

\paragraph{(ii)} 
    Let $o_1, o_2, o_3$ be the three vertices in the clique to which the paths are attached, and let the paths joining them to the leaves be referred to as \emph{branches}, $P_1, P_2, P_3$, respectively. For each $i\in\{1,2,3\}$, the $\rho+1$ vertices furthest from $o_i$ on the path $P_i$ will be called the \emph{house}~$h_i$. See Figure \ref{fig:triod-c-triod}~(ii) for reference. Because $o_1, o_2$, and $o_3$ are pairwise adjacent, the robber can always step directly between them; consequently, his strategy will completely ignore any other vertices in the clique (if they exist).

    Similarly to (i), when choosing his initial position, the robber places himself on a vertex at distance either $\rho + 2$ or $\rho + 1$ from the cop, depending on whether the cop's first move will be towards him or not.
    If there are multiple such vertices, the robber chooses a branch that is neither the cop's current branch nor the branch containing the next house the cop will visit.

    Following this strategy, the robber will only ever occupy vertices on the branches or the attachment points $o_1, o_2, o_3$. Thus, he may have to choose between two possible moves only if he is at $o_i$, for some $i\in\{1,2,3\}$, and one of the following two cases occurs:

    Case 1. The robber is at $o_i$ and the cop is on the branch $P_i$ at distance $\rho+1$, with an intention to move towards the robber on the next move. The robber considers the next house $h_j$ (different from $h_i$) that the cop is going to enter, and then he moves to $o_k$, where $k\not\in\{i,j\}$.

    Case 2. The robber is at $o_i$, and the cop is on $P_j$, $j\not= i$, at distance $\rho+2$ from the robber, with the intention to move towards the robber on his next move. The robber considers the next house $h_x$ (different from $h_j$) that the cop is going to enter, and then he either stays at $o_i$, if $i\not=x$, or he moves to $o_k$, $k\not\in\{i,j\}$, if $i=x$.

    Note that whenever the cop enters a house, the robber positions himself in such a way that, if the cop moves towards the robber, the robber can retreat to the branch that does not contain the house that the cop will enter next. 

    Now again, as the robber maintains a safe distance from the cop at all times, the only way for him to get caught is if he is cornered in a leaf with the cop approaching within a distance of $\rho$. But that means that the cop entered the house the robber is in, which cannot happen.
\end{proof}

For a general graph $G$ and a vertex $v$ of degree at least 3, we define $\ell_3(v)$ to be the maximum $\ell_3(T)$ over all triods $T$ with origin $v$ induced in $G$. If $v$ is of degree less than 3, then $\ell_3(v)=0$.

\begin{theorem}\label{th:trees}
    For a tree $T$, 
    \[\range{T} = \max_{v\in V(T)}\left\lfloor \frac{\ell_3(v)}{2}\right\rfloor.\]
\end{theorem}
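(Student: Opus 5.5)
Set $k=\max_{v\in V(T)}\lfloor \ell_3(v)/2\rfloor$; we prove the two inequalities $\range{T}\ge k$ and $\range{T}\le k$ separately.

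For the lower bound we may assume $k\ge 1$, and we fix a vertex $v$ with $\lfloor \ell_3(v)/2\rfloor = k$. Then $v$ is the origin of an induced triod $T'$ with $\ell_3(T')=\ell_3(v)\ge 2k = 2(k-1)+2$. By Lemma~\ref{l:robber-special-graphs}(i), the robber wins on $T'$ against radius $\rho=k-1$, so $\range{T'}>k-1$. Every subtree of a tree is a weak retract: map each vertex to its nearest vertex of the subtree, and this map is a (weak) homomorphism. Theorem~\ref{th:retract} then gives $\range{T}>k-1$.

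For the upper bound we let $\rho=k$, so that every vertex $v$ of degree at least $3$ satisfies $\ell_3(v)\le 2\rho+1$. In other words, at every branching vertex, all but at most two branches have depth at most $2\rho+1$. We then build a patrol.
\begin{enumerate}[(1)]
\item First we construct a ``spine''. Call a branch at $v$ \emph{long} if its depth is at least $2\rho+2$. Each vertex has at most two long branches, so the union of all long directions forms a path $S=s_0s_1\dots s_m$. If $S$ is empty, the tree has small diameter and can be handled directly from one vertex. Every component of $T-S$ attached at $s_i$ then has depth at most $2\rho+1$ from $s_i$; more precisely, its height measured from $s_i$ is at most $2\rho+1$.
\item The cop walks along $S$ from $s_0$ to $s_m$. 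At each $s_i$ he performs the local guarding maneuver of Lemma~\ref{l:local-guarding}, which checks every vertex within distance $2\rho+1$ of $s_i$, including all hanging subtrees at $s_i$, while never letting the robber enter $s_i$.
\item We maintain the invariant that after processing $s_i$, the robber cannot be in the part of $T$ consisting of $s_0,\dots,s_i$ and their hanging subtrees. Suppose the robber were in such a hanging subtree. The cop checks every vertex of it, but the robber could try to dodge by moving. Since the robber cannot pass through $s_i$ and the excursions are sequential, we must argue that a robber confined to a subtree of height at most $2\rho+1$ gets caught. To make this work we order the excursions so that the cop goes deep along each branch: a single excursion to depth $\rho+1$ covers everything at distance at most $2\rho+1$ along that branch, and the robber, confined below, cannot slip past because the vertices between the cop and $s_i$ are continuously within distance $\rho$.
\end{enumerate}

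The main obstacle is this contamination argument inside the hanging subtrees, which are themselves trees possibly with branching. The planned fix is to do a depth-first excursion to every vertex at distance exactly $\rho+1$ from $s_i$ in the hanging part (or the deepest available one). During each excursion, the cop's ball of radius $\rho$ separates the cleared region from the rest. So we will show that the cleared set grows monotonically, using that the ball around a vertex at depth $\le\rho+1$ contains the whole root path, and that sub-branches beyond distance $\rho$ from the cop's turning point have depth at most $2\rho+1-(\rho+1)=\rho$ and are fully covered.
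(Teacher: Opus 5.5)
Your lower bound is the paper's argument almost verbatim: an induced triod of size at least $2k$ at a maximiser, Lemma~\ref{l:robber-special-graphs}(i) with radius $k-1$, the nearest-vertex weak retraction, and Theorem~\ref{th:retract}. Your upper bound follows the paper's plan of sweeping along a path and clearing the hanging subtrees with Lemma~\ref{l:local-guarding}.

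The step that fails as written is the spine. ``Each vertex has at most two long branches'' does not imply that the union of long directions is a path. Take any vertex $v$ with a long branch: every vertex $y$ in a short branch of $v$ has a long direction pointing back to $v$. For example, take $\rho=k=0$ and the spider with legs of lengths $10,10,1$. The leaf of the short leg has a long direction towards the centre, so the union of long directions is the whole spider. The degree bound does give a path if you keep only edges $xy$ that are long from both ends, meaning the branch at $x$ containing $y$ and the branch at $y$ containing $x$ are both long. With that definition, your case $S=\emptyset$ is exactly diameter at most $4\rho+2$.

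The simplest repair, however, is the paper's: take a longest path $v_0v_1\dots v_m$. By maximality, a subtree hanging at $v_i$ has depth at most $\min\{i,m-i\}$. When $i,m-i\ge 2\rho+2$, a hanging subtree of depth at least $2\rho+2$ would give $\ell_3(v_i)\ge 2\rho+2$, contradicting $\lfloor\ell_3(v_i)/2\rfloor\le\rho$. So every hanging part has depth at most $2\rho+1$.

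Your worry in step (3), on the other hand, is justified, and the paper's proof passes over it. Checking every vertex of a hanging subtree is not the same as clearing it, and excursions in an arbitrary order can fail. For instance, take $\rho=2$, and let $s_i$ have a child $c$ carrying two legs down to depth $5$, plus a second hanging path of depth $3$. Suppose the cop does leg~1, then the path, then leg~2. A robber waiting at depth $4$ of leg~2 walks through $c$ into leg~1 while the cop is in the path, and is never caught.

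Your depth-first order is the right fix, and the reason it works is short:
\begin{enumerate}[(a)]
\item While the cop processes the targets below a vertex $x$ at depth $t\in[1,\rho]$, he stays on root-to-target paths through $x$ at depth at most $\rho+1$. So his distance to $x$ is at most $\max\{t,\rho+1-t\}\le\rho$, and the robber can never use $x$.
\item The part below a target at depth $\rho+1$ lies within distance $\rho$ of it, so by induction on height every completely processed subtree stays clean.
\item The vertex $s_i$ itself is protected by Lemma~\ref{l:local-guarding}.
\end{enumerate}
With this written out and the spine repaired, your proof is complete, and on this point more careful than the paper's.
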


\begin{proof}
    Let $\rho=\max_{v\in V(T)}\left\lfloor \frac{\ell_3(v)}{2}\right\rfloor$. 
      
    We first provide a winning strategy for the cop when his radius of capture is $\rho$.
    Let $\mathcal{L}=\max_{v\in V(T)}\ell_3(v)$ and let $S=\{v\in V(T) \mid \ell_3(v)=\mathcal{L}\}$. In words, the set $S$ contains the origins of all triods in $T$ with the maximum triod size. By~\cite[Lemma 3]{GrMoTa25}, all vertices of $S$ lie on the same path in $T$. Take a longest path $P$ in $T$, say $v_0, v_1, \ldots, v_m$, such that it includes all vertices of $S$. 
 
    The cop's fixed patrol is formed as follows. He proceeds sequentially along the path $P$ from $v_0$ to $v_m$. At each vertex $v_i$, before advancing to $v_{i+1}$, the cop systematically clears all subtrees attached to $v_i$ excluding those attached to $v_{i-1}$ and $v_{i+1}$. We claim that there is no subtree attached to $v_i$ of depth $d \ge 2\rho+2$. We evaluate this based on the position of $v_i$ along $P$:
    \begin{itemize}
        \item When $i\le 2\rho+1$, a subtree of depth $d \ge 2\rho+2$ would induce a path from $v_m$ to the subtree's deepest leaf of length $(m-i) + d \ge m - (2\rho+1) + (2\rho+2) = m+1 > m$. This contradicts that $P$ is a longest path.
        \item When $2\rho+1 <  i < m - (2\rho+1)$, we have $i \ge 2\rho+2$ and $m-i \ge 2\rho+2$. If $d \ge 2\rho+2$, then $v_i$ acts as the origin of a triod with three branches of length at least $2\rho+2$. This implies $\ell_3(v_i) \ge 2\rho+2$, which yields $\lfloor \ell_3(v_i)/2 \rfloor \ge \rho+1$, directly contradicting the definition of $\rho$.
        \item When $i \ge m-(2\rho+1)$, a subtree of depth $d \ge 2\rho+2$ would induce a path from $v_0$ to the subtree's deepest leaf of length $i + d \ge m - (2\rho+1) + (2\rho+2) = m+1 > m$, again contradicting that $P$ is a longest path.
    \end{itemize}

    Because every subtree attached to $v_i$ has depth at most $2\rho+1$, the cop can check every vertex within those subtrees by repeatedly applying the maneuver from Lemma~\ref{l:local-guarding} while anchored at $v_i$. This maneuver ensures that $v_i$ is never left unguarded long enough for the robber to safely cross through it. Thus, the subtrees are completely cleared without the robber ever being able to escape to the previously checked regions of the tree. Pushed along the path, the robber is caught, at the latest, when the cop reaches $v_m$. Hence, $\range{T}\leq \rho$.

    To prove that $\range{T}\geq \rho$, we must show that the robber wins against a cop with a capture radius of $\rho-1$. Let $v\in V(T)$ be such that $\ell_3(v) = \max_{u\in V(T)} \ell_3(u) = \mathcal{L}$, and let $T'$ be any induced triod with origin $v$ that maximizes the triod size. By definition, $\ell_3(T') = \mathcal{L}$. Since $\rho = \lfloor \mathcal{L}/2 \rfloor$, it follows that $\ell_3(T') \ge 2\rho$. 
    
    Define the function $p\colon V(T)\rightarrow V(T')$ to map any vertex $u \in V(T)$ to the unique vertex in $V(T')$ which is closest to $u$. It is easy to see that $p$ is a weak homomorphism, making $T'$ a weak retract of $T$. By Lemma \ref{l:robber-special-graphs}~(i), the robber wins on any triod $H$ against a capture radius of $r$ provided that $\ell_3(H) \ge 2r + 2$. Substituting $r = \rho-1$, the condition becomes $\ell_3(H) \ge 2(\rho-1) + 2 = 2\rho$. Since $\ell_3(T') \ge 2\rho$, the robber possesses a winning strategy on $T'$ against radius $\rho-1$. Using Theorem \ref{th:retract}, this guarantees the robber also wins on $T$, meaning $\range{T} > \rho-1$. Thus, $\range{T} \ge \rho$.
\end{proof}

\paragraph{Examples.} The following two graphs illustrate how adding an edge to a graph $G$ may impact $\range{G}$ in either direction. Figure~\ref{fig:edge-increase} shows a graph $G$ (in black) with the addition of a (red) edge $e$. Before adding the edge, $G$ is a triod with branches of lengths $5, 6,$ and $6$, meaning $\ell_3(G) = 5$. By Theorem~\ref{th:trees}, $\range{G} = \lfloor 5/2 \rfloor = 2$. Adding the edge $e$ transforms the graph into a clique-triod with a $K_3$ origin and three branches of length $5$. By Lemma~\ref{l:robber-special-graphs}~(ii), the robber wins against a cop with radius $2$ because $\ell_3(G+e) = 5 \ge 2(2) + 1$, which increases the patrol capture radius of the new graph to $\range{G+e} = 3$.

\begin{figure}[ht!]
    \centering
    \begin{tikzpicture}[vertex/.style={inner sep=2pt,draw,circle}]
	\begin{scope}
	\node[vertex] (2) at (-5,0){};
	\node[vertex] (3) at (-4,0){};
	\node[vertex] (4) at (-3,0){};
	\node[vertex] (5) at (-2,0){};
	\node[vertex] (6) at (-1,0){};
	\node[vertex] (7) at (0,0){};
	\node[vertex] (8) at (1,0.5){};
	\node[vertex] (9) at (2,0.5){};
        \node[vertex] (10) at (3,0.5){};
        \node[vertex] (11) at (4,0.5){};
        \node[vertex] (12) at (5,0.5){};
        \node[vertex] (13) at (6,0.5){};
        \node[vertex] (14) at (1,-0.5){};
	\node[vertex] (15) at (2,-0.5){};
        \node[vertex] (16) at (3,-0.5){};
        \node[vertex] (17) at (4,-0.5){};
        \node[vertex] (18) at (5,-0.5){};
        \node[vertex] (19) at (6,-0.5){};
	\path[] (2) edge (3);
	\path[] (3) edge (4);
	\path[] (4) edge (5);
	\path[] (5) edge (6);
	\path[] (6) edge (7);
	\path[] (7) edge (8);
	\path[] (8) edge (9);
        \path[] (9) edge (10);
        \path[] (10) edge (11);
        \path[] (11) edge (12);
        \path[] (12) edge (13);
        \path[] (7) edge (14);
	\path[] (14) edge (15);
        \path[] (15) edge (16);
        \path[] (16) edge (17);
        \path[] (17) edge (18);
        \path[] (18) edge (19);
        \path[color=red, line width=1.3pt] (8) edge (14);
        \pgftext[x=1.2cm,y=0cm]{$e$}
	\end{scope}
	\end{tikzpicture}
\caption{Adding edge $e$ increases the required capture radius from $2$ to $3$.}
    \label{fig:edge-increase}
\end{figure}

On the other hand, Figure \ref{fig:edge-decrease} shows a triod $G$ (in black) with a central origin and three branches of length $4$. Thus, $\ell_3(G) = 4$, which gives $\range{G} = \lfloor 4/2 \rfloor = 2$. The addition of edge $e$ (in red) connects a vertex at distance 2 from the origin on one branch to a vertex at distance 1 from the origin on another branch. This shortcut destroys the original triod structure for any meaningful length: to form an induced triod of length 2 or more from the original center, the branches must include those two vertices, but the new edge $e$ acts as a forbidden chord between them. Thus, the maximum induced triod from the original center drops to $\ell_3 = 1$. To find $\range{G+e}$, we must identify the longest valid induced triod remaining anywhere in the graph. By shifting the origin to the new junction at the bottom branch, we can form an induced triod by routing the branches as follows: through the old center to the left leaf (length 5), across the new edge $e$ to the top leaf (length 3), and directly down to the bottom leaf (length 3). By skipping the intermediate vertex on the old top branch, we ensure there are no chords. This new configuration yields a maximum induced triod size of $\ell_3(G+e) = \min(5, 3, 3) = 3$. Consequently, the graph's evasion potential collapses, reducing the patrol capture radius to $\range{G+e} = \lfloor 3/2 \rfloor = 1$.

\begin{figure}[ht!]
    \centering
    \begin{tikzpicture}[vertex/.style={inner sep=2pt,draw,circle}]
	\begin{scope}
	\node[vertex] (3) at (-4,0){};
	\node[vertex] (4) at (-3,0){};
	\node[vertex] (5) at (-2,0){};
	\node[vertex] (6) at (-1,0){};
	\node[vertex] (7) at (0,0){};
	\node[vertex] (8) at (1,0.5){};
	\node[vertex] (9) at (2,0.5){};
        \node[vertex] (10) at (3,0.5){};
        \node[vertex] (11) at (4,0.5){};
        \node[vertex] (14) at (1,-0.5){};
	\node[vertex] (15) at (2,-0.5){};
        \node[vertex] (16) at (3,-0.5){};
        \node[vertex] (17) at (4,-0.5){};
	\path[] (3) edge (4);
	\path[] (4) edge (5);
	\path[] (5) edge (6);
	\path[] (6) edge (7);
	\path[] (7) edge (8);
	\path[] (8) edge (9);
        \path[] (9) edge (10);
        \path[] (10) edge (11);
        \path[] (7) edge (14);
	\path[] (14) edge (15);
        \path[] (15) edge (16);
        \path[] (16) edge (17);
        \path[color=red, line width=1.3pt] (9) edge (14);
        \pgftext[x=1.2cm,y=0cm]{$e$}
	\end{scope}
	\end{tikzpicture}
\caption{Adding edge $e$ decreases the required capture radius from $2$ to $1$.}
    \label{fig:edge-decrease}
\end{figure}

Given positive integers $k$ and $\ell$, the graph $T_{k,\ell}$ is obtained by starting with a cycle $C$ of length $3k$, and locating three vertices $v_1$, $v_2$, $v_3$ on the cycle at distance $k$ from each other. Then three vertex-disjoint paths $P_1$, $P_2$, $P_3$ of length $\ell$ are added, and for each $i\in\{1,2,3 \}$ one endpoint of $P_i$ is identified with $v_i$. See Figure \ref{fig:graph-T49} for an example, where $T_{4,9}$ is depicted.

\begin{figure}[ht!]
    \centering
    \pgfdeclarelayer{vozlisca}
    \pgfdeclarelayer{povezave}
    \pgfdeclarelayer{hise}
    \pgfsetlayers{hise,povezave,vozlisca}
    \begin{tikzpicture}[vertex/.style={inner sep=2pt,fill=white,draw,circle}]
        \pgfmathsetmacro{\n}{12}
        \pgfmathsetmacro{\nm}{\n-1}
        \pgfmathsetmacro{\l}{9}

    \begin{pgfonlayer}{vozlisca}
	   \foreach \i in {0,...,\nm}
            \node [vertex]  (v\i) at ({cos(360/\n * \i)}, {sin(360/\n * \i)}) {}; 

        \foreach \i in {0,...,\l} {
            \node [vertex]  (p1-\i) at ({cos(360/\n * 0)*0.5+cos(360/\n * 0)*(\i+1)*0.5}, {sin(360/\n * 0)*(\i+1)*0.5}) {};
            \node [vertex]  (p2-\i) at ({cos(360/\n * 4)*0.5+cos(360/\n * 4)*(\i+1)*0.5}, {sin(360/\n * 4)*0.5+sin(360/\n * 4)*(\i+1)*0.5}) {};
            \node [vertex]  (p3-\i) at ({cos(360/\n * 8)*0.5+cos(360/\n * 8)*(\i+1)*0.5}, {sin(360/\n * 8)*0.5+sin(360/\n * 8)*(\i+1)*0.5}) {};
        }

        \node[vertex, fill=blue!50!white, label=left:{$v_1$}] at ({cos(360/\n * 0)}, {sin(360/\n * 0)}) {};
        \node[vertex, fill=blue!50!white, label={[label distance=0]280:{$v_2$}}] at ({cos(360/\n * 4)}, {sin(360/\n * 4)}) {};
        \node[vertex, fill=blue!50!white, label={[label distance=0]70:{$v_3$}}] at ({cos(360/\n * 8)}, {sin(360/\n * 8)}) {};

        \node[vertex, fill=orange!50!white, label=above:{$v'_1$}] at ({cos(360/\n * 0)*0.5+cos(360/\n * 0)*(2)*0.5}, {sin(360/\n * 0)*(2)*0.5}) {};
        \node[vertex, fill=orange!50!white, label=left:{$v'_2$}] at ({cos(360/\n * 4)*0.5+cos(360/\n * 4)*(2)*0.5}, {sin(360/\n * 4)*0.5+sin(360/\n * 4)*(2)*0.5}) {};
        \node[vertex, fill=orange!50!white, label=left:{$v'_3$}] at ({cos(360/\n * 8)*0.5+cos(360/\n * 8)*(2)*0.5}, {sin(360/\n * 8)*0.5+sin(360/\n * 8)*(2)*0.5}) {};
    \end{pgfonlayer}

    \begin{pgfonlayer}{povezave}
        \draw (0,0) circle (1);
        \draw (p1-0)--(p1-\l);
        \draw (p2-0)--(p2-\l);
        \draw (p3-0)--(p3-\l);
    \end{pgfonlayer}

    \begin{pgfonlayer}{hise}
        \draw[rounded corners, fill=yellow!50!white] (2.75, -0.25) rectangle (5.75,0.25);
        \draw[rounded corners, fill=yellow!50!white, rotate around={120:(0,0)}] (2.75, -0.25) rectangle (5.75,0.25);
        \draw[rounded corners, fill=yellow!50!white, rotate around={240:(0,0)}] (2.75, -0.25) rectangle (5.75,0.25);
        \node[label=above:{$h_1$}] at (4.25, 0.1) {};
        \node[label=above:{$h_2$}] at (-1.5, 3.25) {};
        \node[label=below:{$h_3$}] at (-1.5, -3.25) {};
    \end{pgfonlayer}
	\end{tikzpicture}
\caption{Example of $T_{4,9}$}
    \label{fig:graph-T49}
\end{figure}

\begin{theorem}\label{thm:cycle-triod} For every positive integers $k$ and $\ell$ it holds that
$$\range{T_{k,\ell}} = \max \left\{ \left\lceil \frac{\ell}{2} + \frac{k-2}{4}\right\rceil, \left\lceil \frac{3k-2}{2}\right\rceil \right\}.$$
\end{theorem}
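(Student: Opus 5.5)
We prove matching upper and lower bounds, treating the two terms of the maximum separately. Write $\rho_1=\lceil \ell/2+(k-2)/4\rceil$ and $\rho_2=\lceil (3k-2)/2\rceil$.

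\textbf{Lower bound from $\rho_2$.} The cycle $C$ of length $3k$ is a weak retract of $T_{k,\ell}$: map every vertex of $P_i$ to $v_i$. We first show directly that the robber beats a cop of radius $\rho<\rho_2$ on $C_n$ with $n=3k$. The robber stays antipodal, or nearly antipodal, to the cop. The cop's distance to an antipodal vertex is $\lfloor n/2\rfloor$. Since the robber moves after the cop and may correct by one step, he can maintain distance at least $\lfloor n/2\rfloor-1$ after the cop's move and before his own reply. One checks that this is at least $\rho+1$ exactly when $\rho\le\lceil (n-2)/2\rceil-1$. Theorem~\ref{th:retract} then gives $\range{T_{k,\ell}}\ge \rho_2$. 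The parity bookkeeping deserves care, and I would confirm it on $C_3$, $C_4$ and $C_6$.

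\textbf{Lower bound from $\rho_1$.} Here we adapt the house strategy of Lemma~\ref{l:robber-special-graphs}. The houses $h_i$ are the last $\rho+1$ vertices of each $P_i$, and the three $v_i$ play the role of the origin. Every house-to-house path passes through the cycle and has length $2(\ell-\rho-1)+k$. The robber keeps the cop at distance $\rho+1$ or $\rho+2$ as before. When he is forced onto the cycle while the cop descends some $P_i$ toward $v_i$, he must reach a branch $P_k$ whose house the cop does not visit next, and he must do so before the cop can cut him off. The cop can shortcut across the cycle, so the constraint is metric. The robber starts at distance $\rho+1$ from the cop near $v_i$ and must travel the $k$ cycle edges to $v_k$ while the cop heads toward $v_j$ and then along $P_j$ to $h_j$, or toward $v_k$ himself. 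Comparing these distances gives $\ell \ge 2\rho+2-(k-2)/2$, roughly. This is exactly $\rho<\ell/2+(k-2)/4$, that is $\rho\le \rho_1-1$. I expect this step to be the main obstacle: the cop can also wait on the cycle between $v_j$ and $v_k$ to threaten both branches. The accounting must show that the robber can use the extra length $k$ of cycle travel at half efficiency, which is where the $(k-2)/4$ comes from. I would first verify the bound on small cases such as $k=2$, where it matches the clique-triod threshold $\ell\ge 2\rho+1$ of Lemma~\ref{l:robber-special-graphs}(ii).

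\textbf{Upper bound.} Let $\rho=\max\{\rho_1,\rho_2\}$. Since $\rho\ge\rho_2$, a cop sitting at a suitable point of $C$ sees, or can sweep, all of $C$. The patrol runs in three phases:
\begin{enumerate}[(1)]
\item Walk from the leaf of $P_1$ to $v_1$. This pushes the robber out of $P_1$, exactly as in the tree strategy of Theorem~\ref{th:trees}.
\item Move along the cycle toward $v_2$, to the point of the cycle where $P_2$ and $P_3$ are balanced. By Lemma~\ref{l:local-guarding}, the checked region keeps the robber off the cycle. The choice of stopping point, at distance about $(k-2)/4$ past the cycle midpoint, is what makes the remaining depths of $P_2$ and $P_3$ at most $2\rho+1$ measured from a guarded anchor.
\item Clear $P_2$ by the excursion maneuver, return, and then sweep down $P_3$.
\end{enumerate}
The verification consists of checking that the anchor vertex stays guarded, so the robber cannot cross back. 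This reduces to the inequalities $\rho\ge\rho_1$ and $\rho\ge\rho_2$.
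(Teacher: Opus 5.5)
Your patrol is essentially the paper's: sweep $P_1$, walk along $C$, dip into $P_2$ just far enough to check its house, then sweep $P_3$. The problem is that your verification does not show it wins, and for one natural choice of anchor the patrol actually loses.

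Lemma~\ref{l:local-guarding} only guarantees that the robber never \emph{enters} the anchor $a$. In Theorem~\ref{th:trees} that suffices because the anchor is a cut vertex. On $T_{k,\ell}$ it is not: the far arc from $v_3$ to $v_1$ avoids $a$.

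When the cop is at depth $t$ in $P_2$, both $v_3$ and $v_1$ are at distance $k+t$ from him. A robber waiting at the top of $P_3$ can therefore run along the far arc into the already swept $P_1$, which you never revisit. He succeeds as soon as the cop stays at depth at least $\rho+1-k$ for $k+2$ consecutive rounds.

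Concretely, take $T_{3,7}$ with $\rho=4=\rho_1=\rho_2$ and $a$ at distance $1$ from $v_2$. The leaf of $P_2$ is within $2\rho+1$ of $a$, and $a$ is never entered. Yet the excursion keeps the cop at depths $2,3,4,3,2$, i.e.\ for $5=k+2$ rounds, and the robber escapes $P_3\to v_3\to v_1\to P_1$. With $a$ at distance $2$ from $v_2$ the window is only $3$ rounds, and the patrol works. Nothing in your ``anchor stays guarded'' check distinguishes these two choices.

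The missing ingredient is exactly this window count, and it is where $\rho\ge\rho_1$ really enters in the paper. There the cop goes from $v_2$ only to the nearest vertex of $h_2$, at depth $\ell-\rho$. The window then has $2(\ell-2\rho-1+k)+1$ rounds, which is at most $k+1$ precisely when $\rho\ge\rho_1$. Likewise, what keeps the robber off $C$ while the cop walks along it is $\rho\ge\rho_2$, not Lemma~\ref{l:local-guarding}.

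For the $\rho_1$ lower bound you have not yet given a strategy. You never fix when the robber leaves a branch, and the shadowing-and-racing picture leaves open exactly the cop maneuvers you worry about. The paper's idea is to fix the \emph{timing}:
\begin{enumerate}[(i)]
\item the robber hides on $P_i$ as close to $C$ as is safe;
\item he leaves only during the cop's last trip into $h_j$ before $h_i$;
\item he steps onto $C$ when the cop last steps off $v'_j$, the vertex of $P_j$ at distance $\rho$ from $v_i$ and $v_k$, and runs to $P_k$.
\end{enumerate}
Since entering $h_j$ forces the cop down to depth $\ell-\rho$, the same window now has at least $2(\ell-2\rho-1+k)+1$ rounds. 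This is at least $k+2$ exactly when $\rho<\rho_1$, so both bounds come from one count.

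Your inequality $\ell\ge 2\rho+2-(k-2)/2$ is equivalent to $\rho\le \ell/2+(k-2)/4-1$, not to $\rho<\ell/2+(k-2)/4$. It is therefore off by one whenever $\ell/2+(k-2)/4\notin\mathbb{Z}$.

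Finally, your $\rho_2$ bookkeeping fails for odd $n=3k$. Keeping distance $\lfloor n/2\rfloor-1$ does not beat $\rho=\lceil (n-2)/2\rceil-1=\lfloor n/2\rfloor-1$; already $C_3$ with $\rho=0$ is a counterexample. You need the fact that on an odd cycle two adjacent cop positions share an antipodal vertex, so the robber can keep distance exactly $\lfloor n/2\rfloor$.
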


\begin{proof}
Let $C$ be a cycle of length $3k$, and let $\rho$ denote the cop's radius of capture. As before, the $\rho+1$ vertices furthest from $C$ on paths $P_1,P_2$, and $P_3$ will be called \emph{houses}~$h_1, h_2, h_3$, respectively. Furthermore, let $v'_i$ denote the vertex on $P_i$ at distance $\rho$ from either of the other cycle vertices $v_j$, $j\in\{1,2,3\}\setminus \{i\}$. Figure~\ref{fig:graph-T49} shows an example of $T_{4,9}$ with $\rho=5$, illustrating the labeled houses and the vertices $v'_1, v'_2,$ and $v'_3$.

For the lower bound, first note that, if $\rho < \left\lceil \frac{3k-2}{2}\right\rceil$, the robber can survive indefinitely by simply staying on $C$. Now suppose that $\rho < \left\lceil \frac{\ell}{2} + \frac{k-2}{4}\right\rceil$. To devise a strategy for the robber, we study the sequence of the cop's visits to the houses. For his initial position, the robber chooses the path $P_i$ containing the house that will be visited last by the cop, and stays as close as possible to $C$ while remaining safe. 

During the game, if the robber is on the path $P_i$, he remains there until just before the cop visits the house $h_j$ ($j\not=i$), such that the house the cop will visit after $h_j$ is $h_i$. Then, the last time the cop steps off the vertex $v'_j$ before visiting the house $h_j$, the robber steps onto $C$ and goes straight towards the third path $P_k$, where $k\notin\{i, j\}$. The minimum number of steps the cop needs to enter $h_j$ and reach $v'_j$ again is large enough for the robber to survive this migration, and he can continue switching paths like that indefinitely. Note that, as before, the robber is safe to remain on a path as long as the cop does not enter the respective house.

For the upper bound, let $\rho = \max \left\{ \left\lceil \frac{\ell}{2} + \frac{k-2}{4}\right\rceil, \left\lceil \frac{3k-2}{2}\right\rceil \right\}$. The cop can win using the following strategy: he starts from the vertex in $h_1$ that is closest to $C$, then he goes straight towards the vertex in $h_2$ that is closest to $C$, and once he reaches it, he immediately turns back heading towards the vertex in $h_3$ that is closest to $C$.

Let us show that the described patrol is enough for the cop to win. As the cop initially checks all vertices in $P_1$, once he reaches $v_1$ the robber is not on $P_1$. Furthermore, the choice of $\rho$ implies that the robber cannot be on $C$ either. If he is on $P_2$, he will get caught once the cop sweeps $P_2$ and enters $h_2$. Finally, if the robber is on $P_3$, the only possible time to leave it and attempt to walk on $C$ to switch to another path is while the cop is on $P_2$. But the choice of $\rho$ does not leave enough time for the switch. Hence, the robber must remain on $P_3$ at all times, resulting in him getting caught eventually.
\end{proof}

\begin{theorem}
Let $\ell>0$ be an integer, and let $G$ be a clique-triod with three vertex-disjoint paths of length $\ell$, each attached to a different vertex of the clique-origin. Then 
$$\range{G} = \left\lceil \frac{\ell}{2}\right\rceil.$$
\end{theorem}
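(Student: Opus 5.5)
The proof splits into a lower bound, which is immediate from Lemma~\ref{l:robber-special-graphs}~(ii), and an upper bound, which requires constructing an explicit patrol. Throughout, set $\rho=\lceil \ell/2\rceil$.

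\textbf{Lower bound.} Take $r=\rho-1$. Then $2r+1 = 2\lceil \ell/2\rceil -1 \le \ell = \ell_3(G)$. By Lemma~\ref{l:robber-special-graphs}~(ii), the robber wins against a cop with capture radius $\rho-1$. Hence $\range{G}\ge \rho$.

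\textbf{Upper bound: the patrol.} Let the clique-origin be $K$, with attachment vertices $o_1,o_2,o_3$ and branches $P_1,P_2,P_3$. Each branch has length $\ell$, ending at a leaf $x_i$. Let $u_i$ be the vertex on $P_i$ at distance $\lceil \ell/2\rceil-1$ from $o_i$ (take $u_i=o_i$ if $\ell\le 2$). A cop standing at $u_i$ checks two things at once:
\begin{itemize}
\item the whole of $P_i$ beyond $u_i$, since $d(u_i,x_i)=\ell-\lceil \ell/2\rceil+1\le \rho$ when $\ell$ is even. When $\ell$ is odd this distance is $\rho+1$, so the cop must step one further to the vertex at distance $\lceil\ell/2\rceil$ from $o_i$.
\item all of $K$, and the first $\rho-\lceil\ell/2\rceil$ vertices of the other branches, plus slack.
\end{itemize}

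The plan is a sweep. First, clear $P_1$ completely by starting at the leaf $x_1$ and walking inward; the robber can never be behind the cop. Next, stand near $o_1$ so that $K$ is checked. Then walk out along $P_2$ to its leaf, which pushes any robber on $P_2$ to capture. Meanwhile $P_1$ and $K$ must stay protected while the cop is deep in $P_2$. This is the delicate point. It should work because the robber needs roughly $\ell+1$ steps to go from $P_3$ across $K$ to a clean vertex, whereas the cop spends at most about $2\lfloor \ell/2\rfloor$ steps away. The robber would also have to cross $K$ while the cop is still within $\rho$ of it.

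A cleaner alternative is to anchor at $o_2$ and use Lemma~\ref{l:local-guarding}-style excursions of depth $\rho$ into $P_2$, so that $P_2$ is swept while $K$ stays guarded. With depth $\rho$ the cop sees up to distance $2\rho\ge\ell$ along $P_2$. Hence $P_2$ is cleared while $K$ is unguarded for at most one round at a time, and only partially. Finally, the cop walks from $K$ out along $P_3$ to its leaf, driving the robber, who is now confined to $P_3$, into the leaf.

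\textbf{Main obstacle.} The hard step is making precise that during the clearing of $P_2$ the robber cannot slip from $P_3$ through $K$ into $P_1$ or into an already cleared part of $P_2$. I would handle this by tracking the cleaning process, that is, the set of dirty vertices, rather than robber strategies. The key invariant is that after each excursion the dirty set is contained in $P_3$ minus its first few vertices, together with the unexplored tail of $P_2$.

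Parity of $\ell$ needs a separate check: for odd $\ell$ there is one extra unit of slack, and for even $\ell$ the bound $2\rho=\ell$ is tight. In the even case I would verify that the single unguarded round at the end of an excursion does not let dirt reach $o_1$. The reason is that $o_3$ is still within distance $\rho$ of the cop, by the same argument as in Lemma~\ref{l:local-guarding}.
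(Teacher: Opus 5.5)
Your lower bound matches the paper's argument: Lemma~\ref{l:robber-special-graphs}(ii) with radius $\rho-1$, using $2(\rho-1)+1=2\lceil\ell/2\rceil-1\le\ell$. The gap is in the upper bound. Your primary plan fails, and the count supporting it is wrong.

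If the cop walks out along $P_2$ all the way to its leaf, then for every $\ell\ge 2$ he reaches depth $\rho+1\le\ell$ on $P_2$, and that alone lets the robber escape. The robber does not need about $\ell+1$ steps, because he need not start far out on $P_3$. He shadows the cop, standing at depth $\rho-d$ on $P_3$ while the cop is at depth $d$ on $P_2$, which is distance exactly $\rho+1$. He is at $o_3$ when the cop is at depth $\rho$. He steps to $o_1$ when the cop is at depth $\rho+1$ (distance $\rho+2$). He then retreats along $P_1$ one step per round, never coming closer than $\rho+1$. Your patrol never returns to $P_1$, so he survives forever. Already for $\ell=2$, $\rho=1$, write $P_i=o_iy_ix_i$. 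Against the cop's moves $o_1,o_2,y_2,x_2,y_2,o_2,o_3,\dots$ the robber answers $y_3,y_3,o_3,o_1,y_1,x_1,x_1,\dots$ and is never caught.

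Your ``cleaner alternative'' is the right patrol and is essentially the paper's proof. The paper reuses the patrol of Theorem~\ref{thm:cycle-triod} with $k=1$:
\begin{itemize}
\item start at the vertex of $h_1$ nearest the clique;
\item pass through $o_1,o_2$ to depth $\ell-\rho=\lfloor\ell/2\rfloor$ on $P_2$ and come back;
\item walk out along $P_3$.
\end{itemize}
Drop the first plan and make the depth bound the centre of the argument. You rightly take depth $\rho$ rather than the depth $\rho+1$ of Lemma~\ref{l:local-guarding}. That lemma protects only the anchor $o_2$, not $o_1$ or $o_3$, so its literal excursion would fail exactly as above.

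With depth at most $\rho$, while the cop works on $P_2$ every vertex of $K$, including any extra clique vertices, is within $\rho$ of him except in one isolated round at depth $\rho$. A robber who steps onto $K$ in that round is still there when the cop steps back to depth $\rho-1$, and is caught. Your stated reason, that $o_3$ is still within distance $\rho$ at the bottom of the excursion, is false: it is at distance $\rho+1$. It is the return step that catches him. For odd $\ell$, the paper's depth $\rho-1$ avoids even that single round.

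At depth $\rho$ the cop sees up to depth $2\rho\ge\ell$, so $P_2$ is cleared. The robber is then confined to $P_3$ at depth at least $\rho$ and is trapped on the final walk along $P_3$.

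Finally, the parity claims in your first bullet are swapped. The distance $d(u_i,x_i)=\ell-\lceil\ell/2\rceil+1$ equals $\rho+1$ for even $\ell$ and $\rho$ for odd $\ell$.
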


\begin{proof}
Using Lemma \ref{l:robber-special-graphs}(ii), one immediately obtains that $\range{G} \geq \left\lceil \frac{\ell}{2}\right\rceil$. For the upper bound, namely $\range{G} \leq \left\lceil \frac{\ell}{2}\right\rceil$, one can use the cop's strategy described in the proof of Theorem~\ref{thm:cycle-triod} with the following changes. In the proof of Theorem~\ref{thm:cycle-triod}, set $k=1$ (thus the cycle is of length 3, i.e., a clique of order 3). For $k=1$, the required radius of capture from Theorem~\ref{thm:cycle-triod} evaluates to $\max \left\{ \left\lceil \frac{\ell}{2} - \frac{1}{4} \right\rceil, \left\lceil \frac{1}{2} \right\rceil \right\}$. Because $\ell$ is a positive integer, $\left\lceil \frac{\ell}{2} - \frac{1}{4} \right\rceil = \left\lceil \frac{\ell}{2} \right\rceil \ge 1$. Hence, the maximum simplifies exactly to $\left\lceil \frac{\ell}{2} \right\rceil$, perfectly coinciding with the value in this theorem. Moreover, if the clique-origin of $G$ is a clique of order larger than $3$, following the same line of thought as in the proof of Theorem~\ref{thm:cycle-triod}, one can show that the robber can neither be on the clique-origin nor move through it without being detected by the cop. 
\end{proof}

\section{Grids} \label{sec:grids}

For grids $P_n\Box P_n$, the cop can win with a radius of capture $\frac{n-1}{2}$.
We can embed a tree to prove that when the cop has a radius of capture less than $\frac{3n}{8}$ the robber can escape.

\begin{theorem}
For $n\le m$, we have
$$\min\left\{\frac{n-3}{2},\frac{2m+n-11}{8}\right\} \le \range{P_n\Box P_m} \le \frac{n}{2}.$$
\end{theorem}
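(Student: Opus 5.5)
Both bounds need separate arguments. The upper bound comes from an explicit sweeping patrol. The lower bound embeds a triod as a weak retract of the grid and then invokes Lemma~\ref{l:robber-special-graphs}(i) together with Theorem~\ref{th:retract}.

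\emph{Upper bound.} Index the vertices as $(i,j)$ with $1\le i\le n$ and $1\le j\le m$. The cop walks along the middle row $i=\lceil n/2\rceil$ from column $1$ to column $m$. A vertex $(i,j)$ is at distance at most $\lceil (n-1)/2\rceil\le n/2$ from $(\lceil n/2\rceil,j)$. Hence with radius $\lfloor n/2\rfloor$ the cop checks the entire column he is standing on. While he moves from column $j$ to column $j+1$, every vertex in columns $\le j$ either is checked or is separated from the uncleared part by a checked column. So the cleared region $\{\text{columns}\le j\}$ cannot be recontaminated: the robber would have to pass through column $j$ or $j+1$, which are within radius. Pushing to column $m$ catches the robber. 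If $n$ is even we need $\lceil (n-1)/2\rceil=n/2$, which is still fine.

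\emph{Lower bound.} Let $r$ be the largest integer with $2r+2\le$ (achievable triod size). It suffices to exhibit, inside $P_n\Box P_m$, a weak retract that is a triod $T$ with $\ell_3(T)\ge 2\rho+2$ whenever $\rho<\min\{\frac{n-3}{2},\frac{2m+n-11}{8}\}$. Theorem~\ref{th:retract} and Lemma~\ref{l:robber-special-graphs}(i) then give $\range{P_n\Box P_m}>\rho$. For the triod I would take the origin $o$ on the middle row at column roughly $c$. One branch goes straight left along the middle row to column $1$, giving length $c-1$. The other two branches go vertically from $o$ up to row $1$ and down to row $n$ and then turn horizontally to the right along the top and bottom rows to column $m$, with lengths about $n/2+(m-c)$ each. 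A simpler choice avoids turning branches: two branches go vertically, with lengths about $(n-1)/2$, which explains the term $\frac{n-3}{2}$ via $2\rho+2\le (n-1)/2$... This does not match, so the turning version is needed. Balancing $c-1\approx \frac{n}{2}+m-c$ gives $c\approx \frac{2m+n}{4}$. Then $\ell_3\approx \frac{2m+n}{4}-O(1)$, and $2\rho+2\le\ell_3$ yields $\rho\lesssim\frac{2m+n-11}{8}$ after tracking constants. The term $\frac{n-3}{2}$ should arise as a cap: when $m$ is large, one instead uses a triod whose turning branches must stay non-adjacent, and the vertical length is the bottleneck. Alternatively the horizontal branch cannot exceed what the retraction allows. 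I would determine which constraint forces $\frac{n-3}{2}$ by checking the retraction.

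\emph{Main obstacle.} The hard part is the weak retraction $\varphi$ from the grid onto this non-convex triod. Branches must be pairwise far apart so that $\varphi$ is $1$-Lipschitz. The natural map sends $(i,j)$ to the nearest point of $T$ in a suitable sense: columns left of $c$ go to the middle row, the upper-right region goes to the upper branch, and the lower-right region goes to the lower branch. Near the corners and along the split line between the upper and lower regions, two adjacent vertices could map to points far apart on $T$. To prevent this, I would define $\varphi$ by layers. A point $(i,j)$ with $j\ge c$ above the middle row maps to the point on the upper branch at arc-distance from $o$ equal to a monotone $1$-Lipschitz function of $(i,j)$, for example $\max(\text{rows above middle}, \ldots)$ along the L-shape, and symmetrically below. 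The middle row right of $c$ maps to $o$. The constant losses in this construction ($-11$ and $-3$) come from the rows and columns sacrificed to make the map consistent near $o$ and the corners. I expect verifying the Lipschitz condition across region boundaries to take the most care.
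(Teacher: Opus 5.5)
Your upper bound is the paper's sweep along the middle row, and it is fine. The lower bound, however, has a genuine gap. Your tree is, after the reflection $j\mapsto m+1-j$, the same shape as the tree $T$ the paper uses, and your balancing computation recovers the term $\frac{2m+n}{8}$.

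The failing step is ``find a weak retraction of the grid onto $T$'': it cannot be carried out, however the layers are chosen. A weak retract $H$ of $G$ must be an \emph{isometric} subgraph, because $d_H(u,v)=d_H(\varphi(u),\varphi(v))\le d_G(u,v)$ for $u,v\in V(H)$. In your tree the tips $(1,m)$ and $(n,m)$ are at grid distance $n-1$ but at tree distance $n-1+2(m-c)$. For instance, your rule sending $(\lceil n/2\rceil,m)$ to $o$ already violates the Lipschitz condition against the fixed tip $(1,m)$.

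Switching to another triod does not help either. For $p,q$ on different arms of an isometric triod one needs $|p-q|_1=|p-o|_1+|o-q|_1$, so no two arms may leave $o$ in the same coordinate direction. This forces one arm to be a straight vertical segment, one a straight horizontal segment, and the third a monotone staircase into the opposite quadrant. The arm lengths therefore sum to at most $n+m-2$, so $\ell_3\le\frac{n+m-2}{3}$. Combined with Lemma~\ref{l:robber-special-graphs}(i) and Theorem~\ref{th:retract}, this route gives at most roughly $\frac{m+n}{6}$. That is already too weak for square grids with $n\ge 26$: for $n=m=30$ you get only $\tilde{\rho}\ge 9$, while the theorem claims $\tilde{\rho}\ge 10$.

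The missing idea is the paper's ``ad hoc refinement'' of Theorem~\ref{th:retract}. Instead of one retraction, the paper uses two weak homomorphisms $\varphi^\uparrow,\varphi^\downarrow$ from the grid onto $T$. Each is the identity on the straight branch and on one turning branch, and folds the horizontal part of the other turning branch back onto its vertical part.

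The robber plays the triod strategy of Lemma~\ref{l:robber-special-graphs}(i) on $T$ against the projected cop. He chooses the projection according to which of the two turning-branch houses the cop visits next. He switches projections only while the cop is on the middle-row segment between the two turning branches. There both projections place the cop at the same distance from $o$, and the robber, being at distance at least $\rho+2$, must be on the straight branch. So the switch leaves the simulated distance unchanged. This is legitimate because the triod strategy depends only on the cop's distance and on the next house he will enter.

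Finally, $\frac{n-3}{2}$ is not a cap coming from a different triod. It is just the condition $2\rho+3\le n$ under which this construction fits. After reducing to $n\le 4\rho+5$, the horizontal parts of the turning branches have length $x=\frac{4\rho+5-n}{2}$, so those branches have length exactly $2\rho+2$. The straight branch then has length $m-x-1\ge 2\rho+2$ if and only if $\rho\le\frac{2m+n-11}{8}$.
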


\begin{proof}
Obtaining the upper bound is straightforward. 
Denote by $u_1, \ldots, u_n$ the vertices of the $P_n$ factor, and by $v_1,\ldots,v_m$ the vertices of the $P_m$ factor. 
The cop starts from the vertex $(u_{\lceil\frac{n}{2}\rceil},v_1)$ 
and follows the $P_m$ along $(u_{\lceil\frac{n}{2}\rceil},v_2)$ up to $(u_{\lceil\frac{n}{2}\rceil},v_m)$.

When on $(u_{\lceil\frac{n}{2}\rceil},v_i)$, the vertices $(u_j,v_i)_{1\le j \le n}$ are within the cop's radius of capture.
At the end of his patrol, the cop has covered all the vertices and the robber cannot have moved past him.

\smallskip

\newcommand{\lup}{\ell^\uparrow}
\newcommand{\ldown}{\ell^\downarrow}
\newcommand{\lright}{\ell^\rightarrow}
\newcommand{\Pup}{P^\uparrow}
\newcommand{\Pdown}{P^\downarrow}
\newcommand{\Pright}{P^\rightarrow}
\newcommand{\Hup}{h^\uparrow}
\newcommand{\Hdown}{h^\downarrow}
\newcommand{\Hright}{h^\rightarrow}
\newcommand{\fdown}{\varphi^\downarrow}
\newcommand{\fup}{\varphi^\uparrow}
\newcommand{\pui}[1]{p^\uparrow_{#1}}
\newcommand{\pdi}[1]{p^\downarrow_{#1}}
\newcommand{\pri}[1]{p^\rightarrow_{#1}}

Suppose now that $\rho \le \frac{2m+n-11}{8}$ and $2\rho+3\le n$. 
We prove the robber has a strategy to escape using an ad hoc refinement of Theorem~\ref{th:retract}.
In the following, we assume that $n$ is odd. (If $n$ is even, we can apply the same strategy as for $n-1$.) Furthermore, we may assume without loss of generality that $n \le 4\rho+5$. If $n > 4\rho+5$, the robber can simply restrict his evasion strategy to an induced subgrid $P_{4\rho+5} \Box P_m$, which is a retract of $P_n \Box P_m$. 

Let $x = \frac{4\rho + 5 - n}{2}$. By our assumption bounding $n$, we are guaranteed that $x \ge 0$, ensuring that the coordinates $v_{x+1}$ used in the subsequent projections are always well-defined.

We define two projections $\fup$ and $\fdown$. 
Both map the vertices $(u_i, v_j)$ with $j \ge x+1$ according to the following rule:
\[
(u_i, v_j) \mapsto 
\begin{cases}
 (u_\frac{n+1}{2},v_{j-\frac{n+1}{2}+i}), 
 & \text{ if } i + j \ge \frac{n+1}{2} + x + 1\text{ and }  i \le \frac{n+1}{2} \\ 
 (u_{i + j - x - 1},v_{x+1}), 
 & \text{ if } i + j < \frac{n+1}{2} + x + 1\text{ and }   i \le \frac{n+1}{2} \\ 
 (u_\frac{n+1}{2},v_{j + \frac{n+1}{2} - i}), 
 & \text{ if } j - i \ge x + 1 - \frac{n+1}{2} \text{ and }  i > \frac{n+1}{2} \\ 
 (u_{i - j + x + 1},v_{x+1}),
 & \text{ if } j - i < x + 1 - \frac{n+1}{2} \text{ and }  i > \frac{n+1}{2}. \\ 
\end{cases}
\]

For vertices $(u_i, v_j)$ with $j < x+1$:
 \[
 \fup(u_i,v_j) = \begin{cases}
 (u_{1},v_{i+j-1}), & \text{ if } i+j < x+2 \\
 (u_{i+j-x-1},v_{x+1}), & \text{ if } i+j \ge x+2; \\
 \end{cases}
 \]
 \[
 \fdown(u_i,v_j) = \begin{cases}
 (u_{n},v_{n-i+j}), & \text{ if } i-j \ge n-x-1 \\
 (u_{i+x+1-j},v_{x+1}), & \text{ if } i-j < n-x-1. \\
 \end{cases}
 \]

Figures \ref{fig:grid-lower-bound-fdown} and \ref{fig:grid-lower-bound-fup} show examples of $\fdown$ and $\fup$, respectively.

\begin{figure}[ht!]
    \centering
    \pgfdeclarelayer{vozlisca}
    \pgfdeclarelayer{povezave}
    \pgfdeclarelayer{houses}
    \pgfdeclarelayer{projections}
    \pgfsetlayers{houses,projections,povezave,vozlisca,main}
    \begin{tikzpicture}[vertex/.style={inner sep=2pt,fill=white,draw,circle}]
        \pgfmathsetmacro{\factor}{0.5}
        \pgfmathtruncatemacro{\n}{11}
        \pgfmathtruncatemacro{\m}{14}
        \pgfmathtruncatemacro{\r}{floor(min((2*\m+\n-11)/8,(\n-3)/2))} 
        \pgfmathtruncatemacro{\x}{floor((4*\r+5-\n)/2)}
        \pgfmathtruncatemacro{\npe}{(\n+1)/2}

    \begin{pgfonlayer}{vozlisca}
	   \foreach \i in {1,...,\n}
            \foreach \j in {1,...,\m} 
            {
                \pgfmathtruncatemacro{\vsota}{\i+\j}
                \pgfmathtruncatemacro{\razlika}{\j-\i}               
                \pgfmathtruncatemacro{\mejaA}{(\n+1)/2+\x+1}
                \pgfmathtruncatemacro{\mejaB}{-(\n+1)/2+\x+1}
                \ifthenelse{\j>\x}{
                    \ifthenelse{\i<\npe \OR \i=\npe}{
                        \ifthenelse{\(\vsota>\mejaA \OR \vsota=\mejaA\)}{
                            \node [vertex,fill=red!40!white]  (\i-\j) at (\factor*\j,-\factor*\i) {};
                        }{
                            \node [vertex,fill=green!40!white]  (\i-\j) at (\factor*\j,-\factor*\i) {};
                        }
                    }{
                        \ifthenelse{\razlika > \mejaB \OR \razlika=\mejaB}{
                            \node [vertex,fill=blue!40!white]  (\i-\j) at (\factor*\j,-\factor*\i) {}; 
                        }{
                            \node [vertex, fill=orange!40!white]  (\i-\j) at (\factor*\j,-\factor*\i) {};                    
                        }
                    }
                }
                {
                    \pgfmathtruncatemacro{\mejaC}{\n-\x-2}
                    \pgfmathtruncatemacro{\imj}{\i-\j}     
                    \ifthenelse{\imj > \mejaC}{
                        \node [vertex, fill=magenta!40!white]  (\i-\j) at (\factor*\j,-\factor*\i) {}; 
                    }{
                        \node [vertex, fill=cyan!40!white]  (\i-\j) at (\factor*\j,-\factor*\i) {}; 
                    }
                
                }
                
            } 
        \node[label=above:{$\lup$}] at (\factor, -\factor) {};
        \node[label=below:{$\ldown$}] at (\factor, -\factor*\n) {};
        \node[label={[xshift=0.25cm, yshift=-0.1cm]:$o$}] at (\factor*\x+\factor, -\factor*\npe) {};
        \node[label={[xshift=0.25cm, yshift=-0.1cm]:$\lright$}] at (\factor*\x+\factor*\r*2+\factor*3, -\factor*\npe) {};
    \end{pgfonlayer}

    \begin{pgfonlayer}{projections}
        \foreach \i in {1,...,\n}
            \foreach \j in {1,...,\m} 
            {
                \pgfmathtruncatemacro{\vsota}{\i+\j}
                \pgfmathtruncatemacro{\razlika}{\j-\i}
                \pgfmathtruncatemacro{\npe}{(\n+1)/2}
                \pgfmathtruncatemacro{\mejaA}{(\n+1)/2+\x+1}
                \pgfmathtruncatemacro{\mejaB}{-(\n+1)/2+\x+1}
                \ifthenelse{\j>\x}{
                    \ifthenelse{\i<\npe \OR \i=\npe}{
                        \ifthenelse{\(\vsota>\mejaA \OR \vsota=\mejaA\)}{
                            \pgfmathtruncatemacro{\proA}{\j-(\n+1)/2+\i}
                            \draw[color=red,->]  (\i-\j)to[out=225,in=45](\npe-\proA);
                        }{
                            \pgfmathtruncatemacro{\proB}{\j+\i-\x-1}
                            \pgfmathtruncatemacro{\xx}{\x+1}
                            \draw[color=green, ->]  (\i-\j)to[out=225,in=45](\proB-\xx); 
                        }
                    }{
                        \ifthenelse{\razlika > \mejaB \OR \razlika=\mejaB}{
                            \pgfmathtruncatemacro{\proC}{\j+(\n+1)/2-\i}
                            \draw[color=blue,->]  (\i-\j)to[out=135,in=-45](\npe-\proC);
                        }{
                            \pgfmathtruncatemacro{\proD}{-\j+\i+\x+1}
                            \pgfmathtruncatemacro{\xx}{\x+1}
                            \draw[color=orange, ->]  (\i-\j)to[out=135,in=-45](\proD-\xx);
                        }
                    }
                }
                {
                    \pgfmathtruncatemacro{\mejaC}{\n-\x-2}
                    \pgfmathtruncatemacro{\imj}{\i-\j}     
                    \ifthenelse{\imj > \mejaC}{
                        \pgfmathtruncatemacro{\proE}{\n-\i+\j}
                        \draw[color=magenta,->]  (\i-\j)to[out=-45,in=135](\n-\proE); 
                    }{
                        \pgfmathtruncatemacro{\proF}{-\j+\i+\x+1}
                        \pgfmathtruncatemacro{\xx}{\x+1}
                        \draw[color=cyan, ->]  (\i-\j)to[out=-45,in=135](\proF-\xx);
                    }
                
                }
                
            }
    \end{pgfonlayer}

    \begin{pgfonlayer}{povezave}
        \foreach \i in {1,...,\n}
        {
            \draw[color=gray] (\i-1) -- (\i-\m);  
        }

        \foreach \j in {1,...,\m}
        {
            \draw[color=gray] (1-\j) -- (\n-\j);  
        }

        \pgfmathtruncatemacro{\xx}{\x+1}
        \draw[line width=3, color=blue] (1-1)--(1-\xx)--(\npe-\xx)--(\n-\xx)--(\n-1);
        \draw[line width=3, color=blue] (\npe-\xx)--(\npe-\m);
    \end{pgfonlayer}
	\end{tikzpicture}
\caption{Example $\fdown$ for $P_{11}\Box P_{14}$, here $\rho=3$.}
    \label{fig:grid-lower-bound-fdown}
\end{figure}

\begin{figure}[ht!]
    \centering
    \pgfdeclarelayer{vozlisca}
    \pgfdeclarelayer{povezave}
    \pgfdeclarelayer{houses}
    \pgfdeclarelayer{projections}
    \pgfsetlayers{houses,projections,povezave,vozlisca,main}
    \begin{tikzpicture}[vertex/.style={inner sep=2pt,fill=white,draw,circle}]
        \pgfmathsetmacro{\factor}{0.5}
        \pgfmathtruncatemacro{\n}{21}
        \pgfmathtruncatemacro{\npe}{(\n+1)/2}
        \pgfmathtruncatemacro{\m}{30}
        \pgfmathtruncatemacro{\r}{floor(min((2*\m+\n-11)/8,(\n-3)/2))} 
        \pgfmathtruncatemacro{\x}{floor((4*\r+5-\n)/2)}

    \begin{pgfonlayer}{vozlisca}
	   \foreach \i in {1,...,\n}
            \foreach \j in {1,...,\m} 
            {
                \pgfmathtruncatemacro{\vsota}{\i+\j}
                \pgfmathtruncatemacro{\razlika}{\j-\i}
                \pgfmathtruncatemacro{\npe}{(\n+1)/2}
                \pgfmathtruncatemacro{\mejaA}{(\n+1)/2+\x+1}
                \pgfmathtruncatemacro{\mejaB}{-(\n+1)/2+\x+1}
                \ifthenelse{\j>\x}{
                    \ifthenelse{\i<\npe \OR \i=\npe}{
                        \ifthenelse{\(\vsota>\mejaA \OR \vsota=\mejaA\)}{
                            \node [vertex,fill=red!40!white]  (\i-\j) at (\factor*\j,-\factor*\i) {};
                        }{
                            \node [vertex,fill=green!40!white]  (\i-\j) at (\factor*\j,-\factor*\i) {};
                        }
                    }{
                        \ifthenelse{\razlika > \mejaB \OR \razlika=\mejaB}{
                            \node [vertex,fill=blue!40!white]  (\i-\j) at (\factor*\j,-\factor*\i) {}; 
                        }{
                            \node [vertex, fill=orange!40!white]  (\i-\j) at (\factor*\j,-\factor*\i) {};                    
                        }
                    }
                }
                {
                    \pgfmathtruncatemacro{\mejaC}{\x+3}
                    \ifthenelse{\vsota < \mejaC}{
                        \node [vertex, fill=magenta!40!white]  (\i-\j) at (\factor*\j,-\factor*\i) {}; 
                    }{
                        \node [vertex, fill=cyan!40!white]  (\i-\j) at (\factor*\j,-\factor*\i) {}; 
                    }
                
                }
                
            }   
        \node[label=above:{$\lup$}] at (\factor, -\factor) {};
        \node[label=below:{$\ldown$}] at (\factor, -\factor*\n) {};
        \node[label={[xshift=0.25cm, yshift=-0.1cm]:$o$}] at (\factor*\x+\factor, -\factor*\npe) {};
        \node[label={[xshift=0.25cm, yshift=-0.1cm]:$\lright$}] at (\factor*\x+\factor*\r*2+\factor*3, -\factor*\npe) {};
    \end{pgfonlayer}

    \begin{pgfonlayer}{projections}
        \foreach \i in {1,...,\n}
            \foreach \j in {1,...,\m} 
            {
                \pgfmathtruncatemacro{\vsota}{\i+\j}
                \pgfmathtruncatemacro{\razlika}{\j-\i}
                \pgfmathtruncatemacro{\npe}{(\n+1)/2}
                \pgfmathtruncatemacro{\mejaA}{(\n+1)/2+\x+1}
                \pgfmathtruncatemacro{\mejaB}{-(\n+1)/2+\x+1}
                \ifthenelse{\j>\x}{
                    \ifthenelse{\i<\npe \OR \i=\npe}{
                        \ifthenelse{\(\vsota>\mejaA \OR \vsota=\mejaA\)}{
                            \pgfmathtruncatemacro{\proA}{\j-(\n+1)/2+\i}
                            \draw[color=red,->]  (\i-\j)to[out=225,in=45](\npe-\proA);
                        }{
                            \pgfmathtruncatemacro{\proB}{\j+\i-\x-1}
                            \pgfmathtruncatemacro{\xx}{\x+1}
                            \draw[color=green, ->]  (\i-\j)to[out=225,in=45](\proB-\xx); 
                        }
                    }{
                        \ifthenelse{\razlika > \mejaB \OR \razlika=\mejaB}{
                            \pgfmathtruncatemacro{\proC}{\j+(\n+1)/2-\i}
                            \draw[color=blue,->]  (\i-\j)to[out=135,in=-45](\npe-\proC);
                        }{
                            \pgfmathtruncatemacro{\proD}{-\j+\i+\x+1}
                            \pgfmathtruncatemacro{\xx}{\x+1}
                            \draw[color=orange, ->]  (\i-\j)to[out=135,in=-45](\proD-\xx);
                        }
                    }
                }
                {
                    \pgfmathtruncatemacro{\mejaC}{\x+3}
                    \ifthenelse{\vsota < \mejaC}{
                        \pgfmathtruncatemacro{\proE}{\j+\i-1}
                        \draw[color=magenta,->]  (\i-\j)to[out=45,in=225](1-\proE); 
                    }{
                        \pgfmathtruncatemacro{\proF}{\j+\i-\x-1}
                        \pgfmathtruncatemacro{\xx}{\x+1}
                        \draw[color=cyan, ->]  (\i-\j)to[out=45,in=225](\proF-\xx);
                    }
                
                }
                
            }
    \end{pgfonlayer}

    \begin{pgfonlayer}{povezave}
        \foreach \i in {1,...,\n}
        {
            \draw[color=gray] (\i-1) -- (\i-\m);  
        }

        \foreach \j in {1,...,\m}
        {
            \draw[color=gray] (1-\j) -- (\n-\j);  
        }

        \pgfmathtruncatemacro{\xx}{\x+1}
        \draw[line width=3, color=blue] (1-1)--(1-\xx)--(\npe-\xx)--(\n-\xx)--(\n-1);
        \draw[line width=3, color=blue] (\npe-\xx)--(\npe-\m);
    \end{pgfonlayer}
	\end{tikzpicture}
\caption{Example $\fup$ for $P_{21}\Box P_{30}$, here $\rho=8$.}
    \label{fig:grid-lower-bound-fup}
\end{figure}

Denote $\lup = (u_1, v_1)$, $\ldown=(u_n, v_1)$, $\lright = (u_\frac{n+1}{2},v_{x+2\rho+3})$, and $o=(u_\frac{n+1}{2}, v_{x+1})$. 

Let $\Pup$ denote the path from $o$ to $(u_1, v_{x+1})$ then to $\lup$, and denote its vertices 
$o,\pui{1},\ldots,\pui{2\rho+2}$.
Let $\Pdown$ be the path from $o$ to $(u_n, v_{x+1})$ then to $\ldown$; similarly, denote its vertices 
$o,\pdi{1},\ldots,\pdi{2\rho+2}$.
Finally, let $\Pright$ be the path from $o$ to $(u_\frac{n+1}{2}, v_m)$ and denote its vertices 
$o,\pri{1},\ldots,\pri{m-x-1}$. Note that the paths $\Pup$ and $\Pdown$ are of length $2\rho + 2$ and the path $\Pright$ is of length at least $2\rho + 2$.

Let $T$ be the tree induced by these three paths.
We define the houses $\Hup$ and $\Hdown$ as the sets of all vertices in $P_n\Box P_m$ at distance at most $\rho$ from vertices $\lup$ and $\ldown$, respectively. The house $\Hright$ is defined to be the set of all vertices that project with $\fup$ or $\fdown$ to a vertex on $\Pright$ at distance at least $\rho+2$ from $o$. Note that such vertices are projected to the same vertex by both projections.  

We define the switching area as the set of vertices \(\{(u_\frac{n+1}{2},v_{j}) \mid j\le x+1\}\) (thus including $o$).

To ensure the robber's simulated strategy is valid, we must verify that $\fup$ and $\fdown$ act as weak homomorphisms (i.e., mapping adjacent vertices in the grid to identical or adjacent vertices on $T$) and that swapping between them does not create an inconsistency.

First, we observe that within each piecewise case, a change of $1$ in either the $i$ or $j$ coordinate results in a change of at most $1$ in the projected coordinates, preserving adjacency. We explicitly check the boundary seams for continuity, using $\fup$ as an example. 
Consider the vertical seam between $j=x$ and $j=x+1$. For $i \ge 2$ and $i+x \ge x+2$, the vertex $(u_i, v_x)$ maps to $(u_{i-1}, v_{x+1})$. Moving one step right to $(u_i, v_{x+1})$ transitions the mapping to the $j \ge x+1$ domain. Evaluating the piecewise definition for $j \ge x+1$ at $(u_i, v_{x+1})$ simplifies to $(u_i, v_{x+1})$ across all relevant conditions. The distance on $T$ between $(u_{i-1}, v_{x+1})$ and $(u_i, v_{x+1})$ is exactly $1$. 
Similarly, consider the diagonal seam in the upper half of the grid where $i \le \frac{n+1}{2}$. A vertex satisfying $i+j = \frac{n+1}{2} + x$ falls into the second case of the piecewise definition, mapping to $(u_{\frac{n+1}{2}-1}, v_{x+1})$. Moving one step right or down to an adjacent vertex where $i+j = \frac{n+1}{2} + x + 1$ crosses into the first case of the piecewise definition, which maps to $(u_{\frac{n+1}{2}}, v_{x+1})$. The projected distance remains exactly $1$. Symmetric verifications hold for all other boundaries and for $\fdown$, confirming that both mappings act as valid weak homomorphisms across their entire domains.

Next, we verify the switching area. If the cop is at a vertex $C = (u_{\frac{n+1}{2}}, v_j)$ with $j \le x+1$, we evaluate both projections. 
Using $\fup$, because $\frac{n+1}{2}+j \ge x+2$, the cop is mapped to $(u_{\frac{n+1}{2}+j-x-1}, v_{x+1})$ on $\Pup$. The distance from this projected position to the origin $o$ is $\frac{n+1}{2} - (\frac{n+1}{2}+j-x-1) = x+1-j$. 
Using $\fdown$, because $\frac{n+1}{2}-j \le n-x-1$, the cop is mapped to $(u_{\frac{n+1}{2}+x+1-j}, v_{x+1})$ on $\Pdown$. The distance from this projected position to $o$ is $(\frac{n+1}{2}+x+1-j) - \frac{n+1}{2} = x+1-j$. 

Because both projections place the cop at the exact same distance ($x+1-j$) from the junction $o$, the switching area is well-defined. Furthermore, when the cop is in this switching area, the robber is at a distance of at least $\rho+2$ from the cop. By the geometry of $T$, the robber must be located on the branch $\Pright$. Because the robber is on $\Pright$, the total distance between the robber $R$ and the projected cop in the simulated game is $d_T(R, o) + d_T(o, \fup(C))$. Since $d_T(o, \fup(C)) = d_T(o, \fdown(C))$, swapping the projection dynamically relocates the cop's imagined position from $\Pup$ to $\Pdown$ without altering the cop-robber distance on $T$ by even a single edge. Thus, switching projections does not create a gap in the strategy.

Now to get back to the robber's strategy, he imagines the game is played on $T$ and follows his strategy there.
The only difference is that the projection of the cop's position onto the tree the robber uses differs depending on where the cop is and which of the other two houses he will visit next.

If the cop entered house $\Hright$ last and is going toward house $\Hup$ (resp. $\Hdown$), or vice versa, then the robber uses the projection $\fup$ (resp. $\fdown$) to imagine the cop's position.
If the cop left house $\Hup$ and is moving toward house $\Hdown$, the robber uses projection $\fup$ for the cop's position until the cop reaches the switching area, at which time the robber switches to projection $\fdown$ to imagine the cop's position, and vice versa.
Observe that the vertices in the switching area are projected to vertices at the same distance from $o$ by both projections. Note also that when the cop moves to a vertex in the switching area while traveling between $\Hup$ and $\Hdown$, the robber is at distance $\rho+2$; hence he is on the branch $\Pright$, so both projections are equidistant to his position. Switching the projection does not generate an inconsistency in his strategy.
\end{proof}

\section{Chordal graphs} \label{sec:chordal}

In this section, we consider the problem on the family of chordal graphs. Note that they are particularly interesting for the $\ell$-visibility Cops and Robber game, as Clarke et al.~\cite[Theorem 3.2]{Cl-et-al-2020} proved that in chordal graphs, $c_\ell(G) = c'_\ell(G)$, i.e., that it is sufficient to see the robber to be able to capture him.

We first consider two special subclasses of chordal graphs, namely caterpillars and interval graphs, with the aim of gaining insight towards a more general result for chordal graphs. We conclude this section with a general conjecture for chordal graphs that would generalize the first results.

A graph $G$ is a \emph{caterpillar} if it is a tree and all vertices are within distance 1 of a central path called the \emph{backbone}. The specific case where the patrol capture radius is exactly $0$ corresponds to the vertex-to-vertex search game with a single searcher, $vs(G)=1$, introduced by To\v{s}i\'c~\cite{Tosic-1985}. The following characterization was originally proven in that context; for completeness, we include a different proof utilizing our terminology and the structural retract lemmas.

\begin{theorem}[To\v{s}i\'c~\cite{Tosic-1985}]\label{thm:caterpillar}
Let $G$ be a connected graph. Then $\range{G} = 0$ if and only if $G$ is a caterpillar.
\end{theorem}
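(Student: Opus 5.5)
The plan is to prove the two directions separately, the forward direction through forbidden weak retracts.

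\emph{Caterpillar implies $\range{G}=0$.} If $G$ is a caterpillar, it is a tree in which every vertex $v$ has $\ell_3(v)\le 1$. Indeed, an induced triod with all three branches of length at least $2$ would contain a vertex at distance $2$ from a backbone-type path, which a caterpillar does not have. More carefully, every triod in a tree lies on a longest path through the spine. A caterpillar contains no subdivided claw $S_{2,2,2}$, and this is its standard characterization among trees. Hence $\max_v \lfloor \ell_3(v)/2\rfloor=0$, and Theorem~\ref{th:trees} gives $\range{G}=0$ immediately. For the degenerate cases (paths, stars) the same formula applies, since $\ell_3\le 1$.

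\emph{$\range{G}=0$ implies caterpillar.} Here I argue by contrapositive. Suppose $G$ is connected but not a caterpillar, and show the robber beats $\rho=0$. There are two cases.

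In the first case, $G$ contains a cycle. Take a shortest cycle $C$. If $|C|=3$, I want to exhibit a weak retract on which the robber wins against radius $0$. If $|C|\ge 4$, then $C$ is isometric, and the robber wins on $C$ with $\rho=0$ by always staying at distance at least $2$ ahead. A natural plan is to retract $G$ onto an isometric cycle via distances from a vertex, but weak retractions onto cycles do not always exist. So I would instead argue directly: the robber plays on $C$, using the projection $u\mapsto$ a nearest vertex of $C$, and capture at radius $0$ only requires avoiding the cop's exact vertex. For $\rho=0$ this is simpler: the robber stays on $C$ and moves away whenever the cop's next vertex would be his own. On a cycle of length at least $3$ he always has a free neighbor (or can stay put), because the cop occupies only one vertex at a time. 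He only needs his current vertex to differ from the cop's next vertex, which is possible since he has at least two options on $C$ besides staying. This handles triangles too, so any graph with a cycle has $\range{G}\ge 1$.

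In the second case, $G$ is a tree but not a caterpillar, so it contains a subdivided claw with all branches of length $2$, that is, a vertex $v$ with $\ell_3(v)\ge 2$. Then Theorem~\ref{th:trees} gives $\range{G}\ge 1$; equivalently, Lemma~\ref{l:robber-special-graphs}(i) with $\rho=0$ combined with Theorem~\ref{th:retract} and the nearest-vertex retraction from the tree proof.

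The main obstacle is the cycle case. Rather than force a weak retract, I must check carefully that the direct robber strategy on $C$ is legal with respect to the move order: the cop moves first, and the robber must not stand on the cop's next vertex. I also need the robber's chosen start to be distinct from both the cop's first vertex and his second vertex, which is possible since $|C|\ge 3$. Each robber move must then avoid the single vertex the cop will occupy next, and because he may either stay or use one of two neighbors on $C$, at least two safe choices remain.
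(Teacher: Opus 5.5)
Your proof is correct. For the direction ``$\range{G}=0$ implies caterpillar'' it follows the paper's proof. A tree that is not a caterpillar contains a subdivided claw with legs of length $2$, so some $\ell_3(v)\ge 2$, and Theorem~\ref{th:trees} gives $\range{G}\ge 1$. If $G$ has a cycle, the robber stays on an arbitrary cycle $C$; it need not be shortest or isometric, and the nearest-vertex projection you mention is not needed.

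You differ from the paper on the converse. The paper gives a direct patrol along the backbone that visits the legs, with the cop back on the backbone every second step. You instead deduce $\range{G}=0$ from Theorem~\ref{th:trees} via $\ell_3(v)\le 1$. This is legitimate, since the proof of Theorem~\ref{th:trees} does not use the caterpillar result. It presents the caterpillar case as the $\rho=0$ instance of the tree formula. The paper's version is self-contained, and in fact the patrol built in Theorem~\ref{th:trees} for $\rho=0$ is exactly that backbone sweep. Base your justification of $\ell_3(v)\le 1$ on the $S_{2,2,2}$-free characterization of caterpillars alone. The sentence about triods lying on a longest path through the spine does not make sense.

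There is one bookkeeping slip in the cycle case. At radius $0$, the robber's new vertex must differ from the cop's next vertex $c_{t+1}$ \emph{and} from the cop's current vertex $c_t$, because stepping onto $c_t$ is an immediate capture. So there are up to two forbidden vertices among his three options on $C$, and at least one safe move remains, not two. For instance, if $c_t$ is a $C$-neighbor of the robber and $c_{t+1}$ is the robber's own vertex, only the other $C$-neighbor is safe. One safe move suffices, so the argument stands once you count this way, which is exactly the paper's count.
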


\begin{proof}
If $G$ is a caterpillar, let $u_1,\ldots,u_k$ be the longest induced path in $G$, its backbone. The cop starts from $u_1$, goes toward $u_k$ while systematically visiting the legs of the caterpillar. The cop is situated on the backbone every second move, thus the robber situated on the path cannot move past the cop toward $u_1$ without being caught.

Suppose now that $G$ is not a caterpillar. If $G$ is a tree, a well-known characterization by Harary and Schwenk~\cite{HS-1971} states that $G$ must contain a subdivided $K_{1,3}$ as a subgraph. Because any connected subgraph of a tree is a retract, $G$ contains a subdivided $K_{1,3}$ as a retract, which implies $\range{G} \ge 1$ by Theorem~\ref{th:trees}.

If $G$ contains a cycle $C$, the robber can evade a cop with a capture radius of $0$ by restricting his movements to $C$. In any round, the robber has three available vertices to occupy: his current position and his two neighbors on $C$. To survive, he must only avoid the cop's current position and the cop's next position. Because these represent at most two distinct vertices, the robber always has at least one safe option. Thus, he survives indefinitely, yielding $\range{G}\ge 1$, which completes the proof.
\end{proof}

An \emph{interval representation} of a graph is a family of real line intervals assigned to vertices in such a way that two distinct vertices are adjacent if and only if the corresponding intervals intersect. A graph is an \emph{interval graph} if it has an interval representation. One may assume that all intervals in any interval representation of an interval graph are closed, have positive length, and no intervals agree on any endpoint. Caterpillars have been characterized as connected triangle-free interval graphs~\cite{Eckhoff93}.

Let $G$ be an interval graph. Given an interval representation of $G$, for all $v\in V(G)$ let $I_v=[b_v, e_v]$ be the interval corresponding to $v$. A vertex $v$ of $G$ is called an \emph{end-vertex} if there exists an interval representation of $G$ such that $b_v = \min\{ b_u \mid u\in V(G)\}$ or $e_v = \max\{ e_u \mid u\in V(G)\}$, the corresponding interval $I_v$ is called an \emph{end-interval}~\cite{Gim88}. A maximal clique of $G$ is an \emph{end-clique} if it contains an end vertex of $G$. It is easy to see that an end-vertex $v$ of $G$ is always simplicial, i.e., $N[v]$ is a clique. 

\begin{theorem}\label{th:interval}
    If $G$ is a connected interval graph, then $\range{G}=\begin{cases}
        0, & \text{if $G$ is a caterpillar} \\
        1, & \text{otherwise.}
    \end{cases}$
\end{theorem}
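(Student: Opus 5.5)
By Theorem~\ref{thm:caterpillar}, $\range{G}=0$ exactly when $G$ is a caterpillar. So for a connected interval graph $G$ that is not a caterpillar we already know $\range{G}\ge 1$, and it remains to show $\range{G}\le 1$. The plan is to give an explicit patrol with radius of capture $1$ that sweeps the interval representation from left to right. We do this by building a ``backbone'' path, exactly as for caterpillars, along which the cop walks while making short excursions.

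\textbf{Step 1: the backbone.} Fix an interval representation with closed intervals and distinct endpoints. Let $u_1$ be the vertex with the smallest left endpoint, which is an end-vertex. Having chosen $u_i$, let $u_{i+1}$ be the neighbor of $u_i$ with the largest right endpoint. We stop at $u_k$ once $e_{u_k}=\max_v e_v$. This is the standard greedy shortest path between end-vertices. Its key properties, which follow from connectivity and the greedy choice, are:
\begin{enumerate}[(a)]
\item every vertex $w$ is adjacent to some $u_i$, so $P=u_1\dots u_k$ dominates $G$;
\item if $w$ is adjacent to $u_i$, then every vertex whose interval lies entirely to the left of $I_w$ is adjacent to, or equal to, some $u_j$ with $j\le i$;
\item crucially, every neighbor of $u_i$ with $b_w<e_{u_{i-1}}$ is adjacent to $u_{i-1}$ or $u_i$, and hence is within distance $1$ of both backbone vertices.
\end{enumerate}

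\textbf{Step 2: the patrol.} The cop starts at $u_1$ and walks $u_1,u_2,\dots,u_k$. At each $u_i$, before stepping to $u_{i+1}$, he makes a detour $u_i,w,u_i$ to every neighbor $w$ of $u_i$ that is not adjacent to $u_{i+1}$ and not yet covered. With radius $1$, standing at $u_i$ already checks $N[u_i]$, so the detours are needed only to check vertices at distance $2$ from $u_i$. This is the analogue of Lemma~\ref{l:local-guarding}, but with the radius-$1$ ball used to keep a separator guarded.

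\textbf{Step 3: the invariant.} Let $S_i$ be the set of vertices whose intervals end before $e_{u_i}$ and that are not adjacent to $u_{i+1}$. We claim that once the cop leaves $u_i$ toward $u_{i+1}$, the robber is not in $S_i\cup N[u_i]$. The point is that in an interval graph, $N[u_i]\cap N[u_{i+1}]$ is a clique separator between the ``left'' vertices and the ``right'' vertices. While the cop is on $u_i$ or on a detour vertex $w\in N(u_i)$, every vertex of that separator clique, being adjacent to $u_i$, stays within distance $2$ of the cop. We need it within distance $1$. For this we use the clique structure: $w$ and the separator vertices all contain a common point only when $w$ meets them, so we restrict detours to vertices $w$ whose intervals contain the point $e_{u_i}$, or to vertices lying entirely to the left. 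The leftward ones can be checked from $u_{i-1}$ earlier, and we order the detours accordingly.

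\textbf{Main obstacle.} The difficult step is Step 3: guaranteeing that the robber can never slip from the unchecked right side to the checked left side through the separator clique during a single detour step. The approach is to mimic the argument in Lemma~\ref{l:local-guarding}. Suppose a separator vertex becomes unguarded for one round. The robber can then enter it, but when the cop returns to $u_i$ on his next move, that vertex is again within distance $1$, so the robber is caught before he can proceed. The first thing to check is that every separator vertex is within distance $2$ of each detour vertex and becomes adjacent to $u_i$ again immediately. If this fails, the fallback is to use only detours to vertices of $N(u_i)\setminus N(u_{i+1})$ that are simplicial, and to treat non-simplicial ones by shifting them onto the backbone.

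Once the invariant holds for all $i$, the robber is confined to ever-smaller regions on the right. He is therefore caught by the time the cop reaches $u_k$, which gives $\range{G}\le1$.
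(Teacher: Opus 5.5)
You have a genuine gap in the upper bound. Your lower bound and the caterpillar case are fine: they come from Theorem~\ref{thm:caterpillar}, exactly as in the paper. But $\range{G}\le 1$ rests entirely on the Step~3 invariant, and what you give for it is a plan, not a proof. You write ``the first thing to check is\ldots'' and ``if this fails, the fallback is\ldots'', so the key claim is never established.

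Step~3 also changes the patrol without pinning it down. The Step~2 detour vertices $w\in N(u_i)\setminus N(u_{i+1})$ satisfy $e_w<b_{u_{i+1}}<e_{u_i}$. You now want such ``leftward'' vertices handled from $u_{i-1}$, but a neighbor $w$ of $u_i$ with $e_{u_{i-1}}<b_w$ is not adjacent to $u_{i-1}$. The reordering is never specified, so it is not even clear which walk you are analysing.

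The stated purpose of the detours is also moot. By your own property (a), every vertex is already checked when the cop passes the right $u_j$. What must actually be shown is that the robber can never get behind the cop. The separator heuristic borrowed from Lemma~\ref{l:local-guarding} is never turned into an inductive statement about which vertices are robber-free.

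The missing idea is a short crossing argument on left endpoints, which makes the separator analysis unnecessary. Let $c_t,r_t$ be the positions after round $t$. Since the cop starts at the vertex with the smallest left endpoint, $b_{r_1}>b_{c_1}$. As long as the robber is uncaught and $b_{r_t}>b_{c_t}$, the intervals are disjoint, i.e.\ $e_{c_t}<b_{r_t}$. Suppose that in the next round $b_{r_{t+1}}\le b_{c_{t+1}}$. Using $c_{t+1}\in N[c_t]$ and $r_{t+1}\in N[r_t]$,
\[
b_{r_{t+1}}\le b_{c_{t+1}}\le e_{c_t}<b_{r_t}\le e_{r_{t+1}},
\]
so $b_{c_{t+1}}\in I_{r_{t+1}}$ and the robber is within distance $1$, hence caught. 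So the robber can never pass the cop. He is caught once the cop reaches a position that forces either $b_r\le b_c$ or adjacency.

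The paper gets this by walking through all vertices, since the vertex with the largest left endpoint is eventually visited. For your backbone it works even with no detours. When the cop stands on $u_k$, any $r$ with $b_r>b_{u_k}$ has $I_r\subseteq I_{u_k}$ because $e_{u_k}$ is maximal, so $r$ is adjacent to $u_k$.
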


\begin{proof}
    If $G$ is a caterpillar, then the claim follows from Theorem \ref{thm:caterpillar}.

    Assume that $G$ is not a caterpillar. Choose an arbitrary interval representation of $G$ and for all $v\in V(G)$ let $I_v=[b_v, e_v]$ be the interval corresponding to $v$. Order the vertices of $G$ with respect to the left end-points in their interval representation, say $v_1, v_2, \ldots, v_n$ where $b_{v_1} \leq b_{v_2} \leq \ldots \leq b_{v_n}$. 

    We claim that if the cop with a radius of capture equal to 1 uses a walk through all vertices, starting from $v_1$, then he can catch the robber. Let the cop start in $c_1=v_1$ and say the robber chooses to be in any other vertex of $G$, say $r_1 \not= c_1$ (otherwise he is immediately caught). If $d(c_1, r_1)=1$ then $\range{G}\leq 1$. 
    
    Assume now $d(c_1, r_1)>1$. Therefore $e_{c_1} < b_{r_1}$. Since in our strategy the cop visits all vertices of $G$, at some point in time, say $t+1$, it must hold that $b_{r_{t+1}} \leq b_{c_{t+1}}$, and for all $t' < t+1$ it holds that  $b_{r_{t'}} > b_{c_{t'}}$. 
    
    If $b_{r_t} \leq e_{c_t}$, then $d(c_t, r_t)=1$ and $\range{G}\leq 1$, again we are done. Otherwise, since $b_{c_t} < b_{r_t}$ and  $b_{r_{t+1}} \leq b_{c_{t+1}}$, $c_{t+1} \in N[c_t]$ and $r_{t+1} \in N[r_t]$, it follows that $I_{c_{t+1}} \cap I_{r_{t+1}} \not= \emptyset$. This means that $d(c_{t+1}, r_{t+1}) \le 1$ (they are on the same clique) and therefore, $\range{G}\leq 1$. By Theorem~\ref{thm:caterpillar} and the fact that $G$ is not a caterpillar, we have that $\range{G}>0$ and the assertion follows.
\end{proof}

A graph $G$ is \emph{chordal} if it has no induced cycles of length greater than $3$. 
Equivalently, chordal graphs admit a perfect elimination ordering, i.e., a bijection 
$\sigma\colon V \rightarrow \{1,\ldots, n\}$ such that the neighborhood of each vertex 
$v$ within $\{ u \mid \sigma(u) > \sigma(v)\}$ induces a clique.
Chordal graphs also admit a clique-tree decomposition, 
i.e., a treelike representation in which each node (usually called \emph{bag}) 
is a maximal clique of $G$. 

All interval graphs are chordal graphs that admit a tree decomposition that form a single path. 
Our intuition is that  the strategy we adopted for trees can also be applied 
on chordal graphs, with a similar strategy. 
However, there is no natural link between the distance separating two vertices in 
a chordal graph and their distance in the tree decomposition. 
We were thus not able to prove it, but we suggest the following conjecture that would settle the case of chordal graphs.

\begin{conjecture}
    Let $G$ be a connected chordal graph and $\rho\ge 2$. We have $\range{G} \ge \rho$ if and only if there is a triod  $T$ with $\ell_3(T) \ge 2\rho$ or a clique-triod $C$ with $\ell_3(C) \ge 2\rho -1$ that is a weak retract of $G$.
\end{conjecture}

Recall that since seeing the robber is equivalent to capturing him in the $\ell$-visibility Cops and Robber game \cite{Cl-et-al-2020}, settling this conjecture in the affirmative would solve the problem for a single cop in both games.

\section*{Acknowledgment} 

This work was initiated at the \emph{3rd Workshop on Games on Graphs} on Rogla, Slovenia in June 2025. We thank the organizers and all the participants for the inspiring atmosphere.

Nina Chiarelli acknowledges partial support by the Slovenian Research and Innovation Agency (research program P1-0404, and research projects N1-0370, and J1-4008).

Miloš Stojaković acknowledges partial support by the Science Fund of the Republic of Serbia, \#7462, \emph{Graphs in Space and Time: Graph Embeddings for Machine Learning in Complex Dynamical Systems (TIGRA)}, and partial support by the Ministry of Science, Technological Development and Innovation of the Republic of Serbia (grants 451-03-33/2026-03/200125 \& 451-03-34/2026-03/200125).

Andrej Taranenko acknowledges the financial support from the Slovenian Research and Innovation Agency (research core funding No. P1-0297, projects N1-0285 and N1-0431).

\end{document}